\newcommand{\pprime}{\raisebox{0pt}[0pt][0pt]{${}^\prime$}}  % this is needed to make ' of zero height otherwise the alighnment will be off
\tikzstyle{u}=[line width=4pt,blue,line cap=butt]
\newcommand{\la}{\langle}
\newcommand{\ra}{\rangle}
\newcommand{\Ho}{\mathcal{H}}
\newcommand{\Ro}{\mathcal{R}}
\renewcommand{\cC}{\mathcal{C}}
\renewcommand{\cF}{\mathcal{F}}
\title{Character varieties as a tensor product}
\author{Martin Kassabov, Sasha Patotski}
\begin{document}
\maketitle

\begin{abstract}
  In this short note we show that representation and character varieties of discrete groups can be viewed as tensor products of suitable
  functors over the PROP of cocommutative Hopf algebras.
  Such view point has several interesting applications. First, it gives a straightforward way of
  deriving the functor sending a discrete group to the functions on its representation variety,
  which leads to representation homology. Second, using a suitable deformation of the functors involved in this
  construction, one can obtain deformations of
  the representation and character varieties for the fundamental groups of $3$-manifolds, and could lead to better understanding of quantum representations of mapping class groups.
\end{abstract}

\section{Introduction}

    Generally speaking, representation variety (more precisely, scheme) is the scheme parametrizing
    homomorphisms of one algebraic object into another one. For example, there are representation schemes $\rep(A,V)$ parametrizing
    homomorphisms of associative algebras $A\to \End(V)$; or the scheme $\rep(\Gamma,G)$ parametrizing group homomorphisms
    from a discrete group $\Gamma$ to an algebraic group $G$; or the scheme $\rep(\mathfrak{a},\mathfrak{g})$
    of Lie algebra homomorphisms $\mathfrak{a}\to \mathfrak{g}$. In each of the examples, there is a natural group
    acting on these schemes, and the character variety is the (categorical) quotient of the representation variety
    by the action of the group. For example, $\mathrm{GL}(V)$ acts on $\rep(A,V)$ by conjugation, $G$ acts on $\rep(\Gamma,G)$,
    and $\rep(\mathfrak{a},\mathfrak{g})$ has the natural action of an algebraic group $G$ with $\mathfrak{g}=\cL ie(G)$.
    The geometry of representation and character varieties has been heavily studied (see, for example, \cite{Kr82},\cite{Sik12} and references therein). Character varieties play an important role in knot theory (\cite{FG},\cite{PS00},\cite{CM12} to name a few references), moduli of connections (see, for example, \cite{AB83},\cite{Gold84}), they give interesting invariants called representation homology (see \cite{BKR}, and Section~\ref{sec:remark}).

    In this paper we are trying to advocate a slightly different point of view on constructing representation and character varieties.
    Let $\Ho$ be the PROP of cocommutative Hopf agebras, that is, a small category with $\ob(\Ho)=\dN$ and with $\operatorname{Mor}(\Ho)$
    generated by certain morphisms, suggestively denoted by $m,\Delta,\eta,\varepsilon,S,\tau$. For a precise definition,
    see Section \ref{sec:prop-of-hopf-alg}. There is a natural correspondence %(equivalence of categories)
    between strict monoidal functors $\Ho\to \cV ect$ and cocommutative Hopf algebras.
    For any discrete group $\Gamma$, there is a natural cocommutative Hopf algebra associated to it, namely, its group algebra $k[\Gamma]$.
    The group algebra, in turn, determines a functor $k[\Gamma]\colon \Ho\to \cV ect$ sending $[n]$ to $k[\Gamma]^{\otimes n}$.
    Similarly, the algebra of regular functions $k(G)$ on an affine algebraic group $G$ is a \emph{commutative}
    Hopf algebra, and therefore determines a functor $k(G)\colon \Ho^{op}\to \cV ect$.
    The following is one of the main results of this note.

	\begin{reptheorem}{thm:rep-var-groups-as-tensor-prod}
		There are natural isomorphisms of commutative algebras
		\begin{enumerate}
			\item $[k(G)]\otimes_\Ho k[\Gamma]\simeq k(\rep(\Gamma,G))$;
			\item if $\Gamma_1,\Gamma_2$ are two discrete groups, then
			$\hom^\Ho(k[\Gamma_1],k[\Gamma_2])\simeq k[\rep(\Gamma_1,\Gamma_2)]$, where $k[\rep(\Gamma_1,\Gamma_2)]$
			denotes the algebra of functions with finite support on the discrete set $\rep(\Gamma_1,\Gamma_2)$.
		\end{enumerate}
	\end{reptheorem}
	
	In the theorem, $[k(G)]\otimes_\Ho k[\Gamma]$ denotes the tensor product of two functors, defined by the formula \eqref{eq:tensor}.
	By $\hom^\Ho$ we denote the space of all ($k$-linear) natural transformations between two functors.
	
    It is important to note that this theorem is not really useful for computations. Indeed, unraveling the definition of the tensor product over $\Ho$ one is quickly lead to a quotient of
    the space of functions on several copies of $G$ by the relations coming from a presentation of the group $\Gamma$.

    The result essentially equivalent to the first part of this Theorem was independently obtained by Massuyeau--Turaev, see~\cite{MT}. They used that both functors $[k(G)]$ and $k[\Gamma]$ are monoidal,
    which turns the tensor product over $\Ho$ into a quotient of the tensor algebra of $k(G) \otimes k[\Gamma]$.
    One of the advantages of our viewpoint is that
    it can be applied even to non-monoidal functors. In particular, this construction also allows to view the \emph{character}
    variety as the tensor product over $\Ho$ of the functor $k[\Gamma]$
	and the functor $[k(G)]^G\colon \Ho^{op}\to \cV ect$ sending $[n]$ to $[k(G^n)]^G$, the algebra of invariants
	under the diagonal action of $G$ on $G^n$ by conjugation. Notice that since it is not a strict monoidal functor,
	it does not correspond to a Hopf algebra.
	
	Another advantage of viewing the character variety as a tensor product of two functors over the category $\Ho$
	is that it gives an alternative way of defining representation homology for Lie algebras and groups,
	see Section~\ref{sec:remark} for more details.
	
	We feel that the main advantage is the fact that it allows
	to quantize the character variety of a surface group in such a way that the action of $\cM_{\Sigma}$ would be automatically present.
	We sketch this in Section~\ref{sec:deform} and provide details in the forthcoming paper~\cite{KP}. The idea is to deform
	each of the three ingredients in $k(G)\otimes_\Ho [\pi_1(\Sigma)]$. Namely, we replace the commutative algebra $k(G)$
	by the algebra $k_q(G)$ of functions on the corresponding quantum group. The fundamental group $\pi_1(\Sigma)$
	is replaced by the set $[\Sigma]$ of isotopy classes of ribbons inside $\Sigma\times [0,1]$ with ends in a small
	disk near the basepoint of $\Sigma$. Finally, we replace $\Ho$ by a certain category $\Ro$ analogous
	to the category of ribbons. Then, the tensor product $k_q(G)\otimes_{\Ro} [\Sigma]$ will again be an algebra
	(but no longer commutative), which will be a deformation of $k(G)\otimes_\Ho [\pi_1(\Sigma)]\simeq \rep(\pi_1(\Sigma),G)$.
    Such construction of a deformation of the representation variety is similar to the one announced in~\cite{Hab}.

    The ultimate goal of the viewpoint explained in this note and of the aforementioned construction in~\cite{KP}
    is to obtain a better understanding of the quantum representations of the mapping class
    group $\cM_g$ of a closed surface of genus $g$. There are several different constructions
    of these representations: for the approach via TQFT and skein theory see, for example, \cite{Fun99}, \cite{BHMV}, and the book
    \cite{Koh02} for a nice exposition. For a short summary of the geometric quantization approach see~\cite{Mas03}
    and references therein. It was recently proved by Andersen and Ueno in a sequence of papers
    (see~\cite{AU1}, \cite{AU2}, \cite{AU3} and~\cite{AU4}) that both constructions give isomorphic representations of the mapping class group.
    In~\cite{KP} we show that the skein theoretic construction of the quantum representations can be obtained as tensor product where one of the functors is related to the small quantum group at roots of unity.
    We hope that the close relationship between such tensor product and the representation varieties can be used to show that the construction of quantum representations via geometric quantization
    can also be obtained using similar tensor product. Ideally, this will lead to semi-elementary proofs
    of many known properties of quantum representations, like (in)finiteness properties (see \cite{Gil99}, \cite{Mas99})
    or asymptotic faithfulness (see~\cite{And06} and~\cite{FWW}).
	
	\medskip
	
	\textbf{Acknowledgements}: We thank  Yuri Berest and Jim Conant for a lot of helpful discussions and comments. Martin Kassabov was partially supported by NSF grants DMS 1303117 and 1601406.

\section{PROP of cocommutative Hopf algebras}
\label{sec:prop-of-hopf-alg}
\subsection{The category $\Ho$}
	Let $\Ho$ be the \emph{PROP of cocommutative Hopf algebras} ---
	this is a small category with objects $\ob(\Ho)=\dN$, denoted by $[n],\, n\in\dN$.
    The category $\Ho$ has an additional monoidal structure is given by $[n]\otimes [m]=[n+m]$.
	The set of morphisms is generated by the morphisms $\mu\colon [2]\to [1]$, $\Delta\colon [1]\to [2]$, $S\colon [1]\to [1]$,
	$\eta\colon [0]\to [1]$, $\varepsilon\colon [1]\to [0]$, $\tau\colon [2]\to [2]$ subject
	to the relations satisfied by cocommutative Hopf algebras. For example,
	the associativity is the equality of the following morphisms
    $\mu \circ (\mu \otimes \id) = \mu \circ (\id \otimes \mu)$ as elements in $\hom_{\Ho}([3],[1])$.
    See Section~\ref{subsec:axioms-Hopf-alg} for the complete list of relations.
    We will sometimes consider the linearized version of $\Ho$, where the space of morphisms from $[n]$ to $[m]$
    will be the vector space spanned by the set $\hom_\Ho([n],[m])$.
    We will abuse the notation and denote this category by $\Ho$.

	\begin{remark}
		\label{rem:functors-for-hopf-alg}
		Immediately from the definition, there is an equivalence between the category
		of cocommutative Hopf algebras and the category of \emph{strict monoidal} functors $\Ho\to \cV ect$,
		assigning to a Hopf algebra $H$ the functor $[H]\colon [n]\mapsto H^{\otimes n}$.
		
		Similarly, the category of \emph{commutative} Hopf algebras is equivalent to the category of
		\emph{contravariant} strict monoidal functors $\Ho^{op}\to \cV ect$.

	\end{remark}

	The connection between category $\Ho$ and group theory arises from the following folklore statement:

	\begin{theorem}
        \label{thm:folklore}
		There is an equivalence of categories $\cH\simeq \cG r_{ff}^{op}$,
		where $\cG r_{ff}$ is the category of finitely generated free groups.
	\end{theorem}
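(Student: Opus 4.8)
The plan is two-step: first show that $\Ho$ has finite products with $[n]=[1]^{\otimes n}$ and with $[1]$ an internal group object, which identifies $\Ho$ with the Lawvere theory of groups; then invoke the classical equivalence between that theory and $\cG r_{ff}^{op}$. For the first step I would equip each $[n]$ with the comonoid structure built from tensor powers of $(\Delta,\varepsilon)$ on $[1]$; by definition of $\Ho$ this comonoid is cocommutative, and I would check that each generating morphism is a morphism of comonoids --- for $\mu$ this is the bialgebra compatibility, for $\eta$ the compatibility of the unit with $\Delta$ and $\varepsilon$, for $\tau$ the naturality of the symmetry, and for $S$ the fact that the antipode of a \emph{cocommutative} Hopf algebra is a coalgebra map, the single place where cocommutativity is used. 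Since comonoid morphisms are closed under composition and tensor, every morphism of $\Ho$ is one, so in particular $[0]$ is terminal; by the standard criterion (Fox), a symmetric monoidal category whose unit is terminal and in which every object carries a natural cocommutative comonoid structure through which all morphisms factor is cartesian, so the monoidal product is the categorical product.

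Under this product structure the triple $(\mu,\eta,S)$ makes $[1]$ a group object: associativity and the two unit laws are Hopf-algebra relations verbatim, while the antipode relation $\mu\circ(S\otimes\id)\circ\Delta=\eta\circ\varepsilon$ is exactly the inverse axiom, since $\Delta$ has become the diagonal and $\eta\circ\varepsilon$ the canonical map $[1]\to[0]\to[1]$ through the terminal object. The remaining generators $\Delta,\varepsilon,\tau$ are forced to be the diagonal, the projection and the symmetry, so the given list of relations amounts to the group-object axioms plus the bookkeeping that makes everything a finite-product category: $\Ho$ is the free finite-product category on one group object, i.e.\ the Lawvere theory $\mathsf{Th}(\mathbf{Grp})$. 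It is then classical that $\hom_{\mathsf{Th}(\mathbf{Grp})}(n,1)$ is the underlying set of the free group $F_n$, that $\hom(m,n)\cong(F_m)^n\cong\hom_{\mathbf{Grp}}(F_n,F_m)$ with composition given by substitution of words, and hence that $\mathsf{Th}(\mathbf{Grp})\simeq\cG r_{ff}^{op}$ via $n\mapsto F_n$. Concretely, the equivalence $\Ho\simeq\cG r_{ff}^{op}$ sends $[n]$ to $F_n$ and a morphism $[m]\to[1]$ assembled from $\mu$, $\eta$ and $S$ to the corresponding reduced word in $F_m$.

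The real content, and the main obstacle, is the claim that the stated finite list of Hopf-algebra relations is \emph{complete} --- that $\Ho$, as presented by generators and relations, is $\mathsf{Th}(\mathbf{Grp})$ and not a larger category. This is a normal-form argument: using coassociativity, cocommutativity, the counit laws and the bialgebra relations, any composite of $\mu,\Delta,\eta,\varepsilon,S,\tau$ can be rewritten by pushing all $\Delta$'s (and $\tau$'s) toward the source and all $\mu$'s toward the target into the standard shape ``duplicate and permute the $m$ inputs, apply $S$ where needed, then multiply'', which is precisely an $n$-tuple of words, reducible using only the antipode and unit laws. This gives fullness of the functor above. Faithfulness is then easy: applying the strict monoidal functor $[k[F_m]]\colon\Ho\to\cV ect$ of Remark~\ref{rem:functors-for-hopf-alg} to such a normal-form morphism $[m]\to[1]$ and evaluating on the group-like elements $y_1\otimes\cdots\otimes y_m$ returns the word $w(y_1,\dots,y_m)\in F_m\subset k[F_m]$, so distinct reduced words induce distinct morphisms. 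The only laborious parts are checking that the rewriting is confluent and that the usual Hopf axioms follow from the group-object axioms together with the finite-product bookkeeping; both are routine.
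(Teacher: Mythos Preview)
The paper does not actually prove this theorem: it is labelled ``folklore'', the authors point to \cite[Section~5]{Pir02} and \cite[Section~2]{Wh} (noting that neither contains a proof), and they defer to \cite[Thm.~4.5]{CK} for a detailed proof of the equivalent Corollary~\ref{cor:homs-in-H}. So there is no in-paper argument to compare yours against.

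That said, your route is correct and is the standard conceptual proof. Identifying $\Ho$ with the Lawvere theory of groups via Fox's criterion is exactly right; the one place cocommutativity is essential is indeed that $S$ (and also $\Delta$ itself, which you omit from your list of generators to check) be a comonoid morphism. Your faithfulness step is the cleanest part: evaluating the strict monoidal functor $[k[F_m]]$ on the group-like tensor $y_1\otimes\cdots\otimes y_m$ recovers the word, so distinct reduced words give distinct morphisms in $\Ho$. Note that this makes the confluence of your rewriting system unnecessary --- you only need \emph{termination} (every morphism reaches some normal form, giving fullness of $\mathsf{Th}(\mathbf{Grp})\to\Ho$); injectivity of words into $\hom_\Ho([m],[1])$ then gives faithfulness independently. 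This is worth saying explicitly, since you flag confluence as one of the ``laborious parts''.

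Compared to the reference the paper cites, your argument is more structural: rather than directly manipulating generators and relations to exhibit a bijection $\hom_\Ho([n],[m])\simeq F_n^{\times m}$, you factor through the universal property of the Lawvere theory and then use a single well-chosen model ($k[F_m]$) to separate morphisms. The trade-off is that you import Fox's theorem and the identification $\mathsf{Th}(\mathbf{Grp})\simeq\cG r_{ff}^{op}$ as black boxes, whereas a bare-hands proof of Corollary~\ref{cor:homs-in-H} is self-contained but more combinatorial.
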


    The few references we found are~\cite[Section 5]{Pir02} and \cite[Section 2]{Wh}, which do not contain a proof.
    A detailed (and semi-trivial) proof of the following Corollary (which is equivalent to Theorem~\ref{thm:folklore})
    can be found in~\cite[Thm. 4.5]{CK}.
	
	\begin{corollary}
		\label{cor:homs-in-H}
		For any integers $n,m$ we have $\hom_\Ho([n],[m]) \simeq F_n^{\times m}$,
		where $F_n$ denotes the free group on $n$-generators (which comes with a specified choice of generators).
	\end{corollary}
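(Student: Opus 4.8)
The plan is to deduce the statement directly from Theorem~\ref{thm:folklore}: the Corollary is really that theorem re-expressed on hom-sets, combined with the universal property of free groups. The only thing one has to pin down is how the equivalence acts on objects and on the distinguished ``projection'' morphisms $[n]\to[1]$.

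Fix an equivalence $\Phi\colon\Ho\to\cG r_{ff}^{op}$ as in Theorem~\ref{thm:folklore}. The first step is to observe that the monoidal structure on $\Ho$ is \emph{cartesian}: the morphisms $\Delta\colon[1]\to[2]$ and $\varepsilon\colon[1]\to[0]$ induce on each object $[n]=[1]^{\otimes n}$ a cocommutative comonoid structure which is natural in $[n]$, because naturality against each generating morphism of $\Ho$ is one of the defining relations of a cocommutative Hopf algebra (against $\mu$ it is ``$\Delta$ is an algebra map'', against $S$ it is $\Delta S=(S\otimes S)\Delta$, against $\tau$ it is cocommutativity, and against $\eta,\varepsilon,\Delta$ it is the unit, counit and coassociativity identities); naturality then propagates to all morphisms. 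By a standard argument (Fox's theorem) a symmetric monoidal category carrying such natural comonoid structures is cartesian, so in $\Ho$ the object $[0]$ is terminal and $[n]$ is the $n$-fold categorical product of $[1]$.

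Since an equivalence preserves finite products and terminal objects, $\Phi([0])$ is the trivial group and $\Phi([n])\cong G^{*n}$ (free product in $\cG r$, which is the product in $\cG r^{op}$), where $G:=\Phi([1])$. As $\Phi$ is essentially surjective, every finitely generated free group is isomorphic to some $G^{*n}$; comparing ranks in $F_1\cong G^{*n}$ forces $G\cong F_1$, hence $\Phi([n])\cong F_n$. A compatible free basis $x_1,\dots,x_n$ of $F_n$ is obtained by transporting under $\Phi$ the $n$ projections $\pi_i\colon[n]\to[1]$ (the identity on the $i$-th tensor factor and $\varepsilon$ on the others) to homomorphisms $F_1\to F_n$ and taking the image of a fixed generator of $F_1$. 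Therefore
\[
  \hom_\Ho([n],[m])\;\cong\;\hom_{\cG r_{ff}^{op}}(F_n,F_m)\;=\;\hom_{\cG r_{ff}}(F_m,F_n),
\]
and a homomorphism $F_m\to F_n$ is nothing but an arbitrary choice of images in $F_n$ of the basis $x_1,\dots,x_m$ of $F_m$, i.e.\ an element of $F_n^{\times m}$. Naturality of the isomorphism in $n$ and $m$ is then formal: composition in $\Ho$ goes under $\Phi$ to composition of group homomorphisms, which on $F_n^{\times m}$-coordinates is substitution of words.

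Modulo Theorem~\ref{thm:folklore} there is no hard step; the main obstacle is purely organizational, namely the naturality check for $\Delta,\varepsilon$ above and keeping track of the distinguished basis under $\Phi$. One can also bypass $\Phi$ and argue directly inside $\Ho$: using only the Hopf-algebra relations, put every morphism $[n]\to[1]$ into a normal form $\mu(\mu(\cdots),\cdots)$ built out of the $\pi_i$ and the $S\pi_i$, show two such words agree iff they name the same element of $F_n$, and use the comonoid structure to identify $\hom_\Ho([n],[m])$ with $\hom_\Ho([n],[1])^{\times m}$. This is exactly the ``semi-trivial'' verification carried out in~\cite[Thm.~4.5]{CK}, which I would invoke rather than repeat; the only genuinely non-formal input --- existence and uniqueness of that normal form, equivalently the equivalence $\Phi$ --- is Theorem~\ref{thm:folklore} itself, which we are assuming.
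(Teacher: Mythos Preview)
Your argument is correct and is exactly the intended one: the paper does not give a separate proof of this corollary, but simply records it as the hom-set restatement of Theorem~\ref{thm:folklore} and points to \cite[Thm.~4.5]{CK} for details. Your derivation --- showing $\Ho$ is cartesian via the natural cocommutative comonoid structures, so that the equivalence sends $[n]$ to the $n$-fold coproduct $F_1^{*n}\cong F_n$, and then invoking the universal property of $F_m$ --- is a clean way to make explicit how the corollary follows from the equivalence, and your closing remark about the normal-form alternative is precisely the \cite{CK} argument the paper cites.
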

	
	\begin{corollary}
		\label{cor:functor-F-n-is-corepres}
		The functor $h^{[n]}=\hom_\Ho([n],-)$ corepresented by $[n]\in\ob(\Ho)$ is isomorphic to the functor
		$[F_n]\colon \Ho\to \cS ets$ sending $[m]\mapsto F_n^{\times m}$.
	\end{corollary}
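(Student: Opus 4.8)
The plan is to get the isomorphism for free from the Yoneda lemma and then reduce the only substantive point to Corollary~\ref{cor:homs-in-H}.

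First I would pin down what $[F_n]$ means as a functor, since the statement only specifies it on objects. This is the $\cS ets$-analogue of Remark~\ref{rem:functors-for-hopf-alg}: a strict monoidal functor from $\Ho$ to $(\cS ets,\times)$ is the same data as a group object in $\cS ets$, that is, an ordinary group, since in $\cS ets$ the counit axiom forces the comultiplication to be the diagonal, cocommutativity is then automatic, and the remaining Hopf axioms become the monoid axioms together with existence of inverses. Applied to $F_n$ this produces the functor $[F_n]\colon [m]\mapsto F_n^{\times m}$ that sends the generating morphisms $\mu,\Delta,\eta,\varepsilon,S,\tau$ of $\Ho$ to multiplication, the diagonal, the unit, the terminal map, inversion, and the coordinate transposition, respectively.

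Next I would apply the Yoneda lemma: natural transformations $h^{[n]}=\hom_\Ho([n],-)\Rightarrow[F_n]$ correspond bijectively to elements of $[F_n]([n])=F_n^{\times n}$, and I would take $\Phi\colon h^{[n]}\Rightarrow[F_n]$ to be the one corresponding to the tuple $(x_1,\dots,x_n)$ of free generators; explicitly $\Phi_{[m]}(g)=[F_n](g)(x_1,\dots,x_n)$ for $g\in\hom_\Ho([n],[m])$. Naturality of $\Phi$ is then automatic, so the entire statement reduces to showing that each component $\Phi_{[m]}$ is a bijection.

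For that last step I would unwind $\Phi_{[m]}$ through the equivalence $\Ho\simeq\cG r_{ff}^{op}$ of Theorem~\ref{thm:folklore}: a morphism $g\colon[n]\to[m]$ of $\Ho$ corresponds to a homomorphism $\phi_g\colon F_m\to F_n$, and writing $g$ as a composite of tensor products of the generating morphisms shows that $[F_n](g)$ substitutes its argument into the words expressing $\phi_g$ on the generators of $F_m$; hence $\Phi_{[m]}(g)=\bigl(\phi_g(y_1),\dots,\phi_g(y_m)\bigr)$, which is precisely the bijection of Corollary~\ref{cor:homs-in-H} (equivalently, a homomorphism out of a free group is freely and uniquely determined by the images of its generators). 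The one genuinely non-formal point, and the step I expect to demand care, is exactly this identification of $\Phi_{[m]}$ with ``evaluation on generators'', i.e.\ the compatibility of the bijections of Corollary~\ref{cor:homs-in-H} with postcomposition by morphisms of $\Ho$. If one prefers to avoid Theorem~\ref{thm:folklore}, the same compatibility can be verified directly on the six generating morphisms $\mu,\Delta,\eta,\varepsilon,S,\tau$, which is a short finite check.
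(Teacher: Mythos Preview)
Your proposal is correct and is precisely the argument the paper has in mind: the corollary is stated without proof because it is the immediate consequence of Corollary~\ref{cor:homs-in-H} (pointwise bijection) together with the naturality coming from Theorem~\ref{thm:folklore}, and your Yoneda packaging is just the clean way to organize that. Your added care in specifying $[F_n]$ on morphisms via the group-object-in-$\cS ets$ reading of $\Ho$ is a useful clarification the paper leaves implicit.
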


	Let $F\colon \Ho\to \cV ect$ and $G\colon \Ho^{op}\to \cV ect$ be any two functors.
	We define their tensor product $G\otimes_\Ho F$ to be the vector space defined by
	\begin{equation}
		\label{eq:tensor}
	G\otimes_\Ho F = \bigoplus\limits_{n\geqslant 0} G([n])\otimes F([n])\Big/\left( f^\ast(v)\otimes w\sim v\otimes f_\ast(w) \right)
	\end{equation}
	for all $f\in \hom_\Ho([m],[n])$, $v\in G([n])$ and $w\in F([m])$. This construction is also sometimes called the
	\emph{coend} of two functors, see \cite[Section IX.6]{ML}. This definition can be obviously modified to the case
	when both functors are valued in $\cS ets$ instead of $\cV ect$. If $F\colon \Ho\to \cS ets$ and $G\colon \Ho^{op}\to \cV ect$,
	the product $G\otimes_\Ho F$ denotes the tensor product of $G$ and $kF$, $kF([n]):=\operatorname{Span}_k(F[n])$.
	
	\begin{proposition}
		\label{prop:tensor-for-represen-functors}
		For any functor $F\colon \Ho\to \cS ets$ (not necessarily monoidal) there is a natural bijection
		\[
		\hom^\Ho([F_n],F)\simeq F([n])
		\]
		
		If $G\colon \Ho^{op}\to \cS ets$ is a contravariant functor, then
		\[
		G \otimes_{\Ho} [F_n] \simeq G([n])
		\]
	\end{proposition}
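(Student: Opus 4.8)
\emph{Proof strategy.} The plan is to deduce both statements from the Yoneda lemma and its dual, after replacing $[F_n]$ by the corepresented functor $h^{[n]}=\hom_\Ho([n],-)$ via the isomorphism of Corollary~\ref{cor:functor-F-n-is-corepres}. Once this reduction is made, the first claim becomes the Yoneda lemma $\hom^\Ho(h^{[n]},F)\simeq F([n])$, and the second becomes the density (co-Yoneda) formula, which says that the coend $G\otimes_\Ho h^{[n]}$ defined by~\eqref{eq:tensor} (in its $\cS ets$-valued form) is naturally isomorphic to $G([n])$. Both are standard (see \cite[Section IX.6]{ML} for the coend form), but I would still write the two isomorphisms out explicitly, since the concrete formulas are exactly what gets used in the proof of Theorem~\ref{thm:rep-var-groups-as-tensor-prod}.

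For the first statement I would use the usual pair of maps: a natural transformation $\alpha\colon h^{[n]}\to F$ goes to $\alpha_{[n]}(\id_{[n]})\in F([n])$, and an element $x\in F([n])$ goes to the transformation $\eta^x$ whose component at $[m]$ sends $g\in\hom_\Ho([n],[m])$ to $F(g)(x)$. Naturality of $\eta^x$ is immediate from functoriality of $F$, and that the two assignments are mutually inverse and natural in $F$ is a one-line check. Transporting along $[F_n]\simeq h^{[n]}$ then gives $\hom^\Ho([F_n],F)\simeq F([n])$.

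For the second statement I would again reduce to $G\otimes_\Ho h^{[n]}$. On $h^{[n]}$ the covariant $\Ho$-action is postcomposition, so the relation defining~\eqref{eq:tensor} reads $\phi^\ast(v)\otimes g\sim v\otimes(\phi\circ g)$ for $\phi\colon[p]\to[q]$, $v\in G([q])$, $g\in\hom_\Ho([n],[p])$. Define $\Phi\colon G\otimes_\Ho h^{[n]}\to G([n])$ on representatives by $\Phi(v\otimes g)=G(g)(v)$, and $\Psi\colon G([n])\to G\otimes_\Ho h^{[n]}$ by $\Psi(w)=w\otimes\id_{[n]}$. The one step that is not purely formal — and the one I would flag as the crux — is checking that $\Phi$ descends to the quotient: applying $\Phi$ to the two sides of $\phi^\ast(v)\otimes g\sim v\otimes(\phi\circ g)$ gives $G(g)\bigl(G(\phi)(v)\bigr)$ and $G(\phi\circ g)(v)$, which coincide exactly because $G$ is a contravariant functor. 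Granting this, $\Phi\circ\Psi=\mathrm{id}$ because $G(\id_{[n]})=\mathrm{id}$, and $\Psi\circ\Phi=\mathrm{id}$ follows from the same relation with $\phi=g$ and $g$ replaced by $\id_{[n]}$, which yields $G(g)(v)\otimes\id_{[n]}\sim v\otimes g$. Hence $\Phi$ and $\Psi$ are mutually inverse, naturally in $G$, and $G\otimes_\Ho[F_n]\simeq G([n])$.

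In short, the only actual computation is the single well-definedness check for $\Phi$, and even that amounts to one use of the functoriality of $G$; everything else is formal manipulation of the Yoneda correspondence.
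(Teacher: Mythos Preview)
Your proposal is correct and matches the paper's approach: the paper's proof is literally ``The first part is just Yoneda lemma. The second part is an application of Lemma~\ref{lem:tensor-with-corepres-functor},'' and that lemma is exactly the co-Yoneda/density isomorphism $G\otimes_\cC h^{x}\simeq G(x)$ you spell out. The only cosmetic difference is that the paper's Lemma~\ref{lem:tensor-with-corepres-functor} is phrased via the coequalizer universal property (exhibiting the map $\varphi\colon\coprod_{x\to c}G(c)\to G(x)$ and checking it is the coequalizer), whereas you give explicit mutually inverse maps $\Phi,\Psi$; these are two standard packagings of the same argument.
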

	\begin{proof}
		The first part is just Yoneda lemma. The second part is an application of Lemma \ref{lem:tensor-with-corepres-functor}.
	\end{proof}

    \begin{remark}
        If $F$ and $F'$ are two monoidal functors from $\Ho\to \cS ets$, there are two ways to interpret the set $\hom^\Ho(F,F')$. This can denote either the
        set of all natural transformations $\{\phi_{[n]}\colon F([n]) \to F'([n]) \}$ between the functors,
        without requiring any coherence with the monoidal structures on $\Ho$ and $\cS ets$.
%        where $\Ho$ is viewed as an ordinary (non-monoidal) category, i.e. the maps $\phi_{[n]}$
%        satisfy some compatibility conditions.
        Alternatively, one can consider the set of natural transformations preserving the monoidal structure,
%         where $\Ho$ is viewed as monoidal category,
        which is equivalent to adding the
        additional condition $ \phi_{[n+ n']} = \phi_{[n]}\times \phi_{[n']}$.
        We stress that we use the first viewpoint and \emph{do not require} $\phi_{[n]}$ to be compatible with the monoidal structure.
    \end{remark}

    \subsection{Examples of functors from $\Ho$}

    Our main examples of covariant functors $\Ho\to \cV ect$ come from discrete groups and Lie algebras.
    For any discrete group $\Gamma$ there is a natural functor $k[\Gamma]\colon \Ho \to \cV ect$
    associated to the group algebra given by $[n]\mapsto k[\Gamma]^{\otimes n}$. The action of $\Gamma$
    on itself by conjugation induces a natural action of $\Gamma$ on the functor $k[\Gamma]$.
    Taking invariants under this action gives a functor $k[\Gamma]^\Gamma\colon \Ho\to \cV ect$
    sending $[n]$ to $k[\Gamma^{\times n}]^\Gamma$. Note that this functor is not strictly monoidal,
    but only lax monoidal. In particular, it does not correspond to any cocommutative Hopf algebra.
	
	Similarly, if $\mathfrak{a}$ is a Lie algebra, there is a strict monoidal functor $[\cU\mathfrak{a}]\colon \Ho \to \cV ect$
	associated to it, sending $[n]$ to $\cU\mathfrak{a}^{\otimes n}$.
	If $A$ is a Lie group with $\cL ie(A)=\mathfrak{a}$, there are natural functors $[\cU \mathfrak{a}]^{\mathfrak{a}}$
	and $[\cU \mathfrak{a}]^{A}$ sending $[n]$ to the invariants $\left(\cU \mathfrak{a}^{\otimes n}\right)^{\mathfrak{a}}$ and
	$\left( \cU \mathfrak{a}^{\otimes n} \right)^A$ of the diagonal action of $\mathfrak{a}$ and $A$, respectively.
	The latter two functors are only lax monoidal.
	
	Our main examples of contravariant functors $\Ho^{op}\to \cV ect$ come from affine algebraic groups.
	If $G$ is an affine algebraic group, its algebra of regular functions $k(G)$
	defines a contravariant strict monoidal functor which we denote simply by $[k(G)]\colon \Ho^{op}\to \cV ect$
	given by $[n]\mapsto k(G)^{\otimes n}\simeq k(G^n)$.
	Once again, taking invariants of the adjoint action gives a lax monoidal functor $[k(G)]^G\colon [n]\mapsto k(G^n)^G$.
	
    \section{Some properties of the tensor product}

	Let $F\colon \cC\to \cD$ be a functor and let $\varphi$ be morphism in
	$\hom_{\cD} ( d, F(c))$ for some objects $c \in \ob\cC$ and $d\in \ob\cD$.
	Then $\varphi$ can be extended to a natural transformation $d \cdot \hom_{\cC}(c,-) \Rightarrow F$,
	\[
	\coprod\limits_{c\to c'}d\to \coprod\limits_{c\to c'}F(c)\to F(c')
	\]
	where coproducts are taken over the set $\hom_\cC(c,c')$.
	
	Let $\{\varphi_i\}$ a collection of morphisms in $\hom_{\mathcal{D}} ( d, F(c))$. Denote by
	$F/\{\varphi_i\}$ the coequalizer in the functor category $\cF un(\mathcal{C}, \mathcal{D})$ of the natural transformations $d \cdot \hom_{\mathcal{C}}(c,-)\Rightarrow F$ induced by $\varphi_i$.	
	
	\begin{lemma}
		\label{lem:quot-functor}
		For any object $c'\in\ob(\cC)$, $F/\{\varphi_i\}(c')$ is the coequalizer in $\mathcal{D}$ of the maps $\widetilde \varphi_i: \hom_{\mathcal{C}}(c,c')\times d \to F(c')$ defined by
		$\widetilde \varphi_i(\psi) = F_{\psi}\circ \varphi_i$.
	\end{lemma}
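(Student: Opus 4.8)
\medskip

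\noindent\emph{Proof strategy.} The plan is to deduce this from the standard principle that colimits in a functor category $\cF un(\cC,\cD)$ --- as far as they exist --- are computed ``objectwise'': for each $c'\in\ob(\cC)$ the evaluation functor $\mathrm{ev}_{c'}\colon\cF un(\cC,\cD)\to\cD$, $G\mapsto G(c')$, preserves every colimit that $\cF un(\cC,\cD)$ admits (see e.g.\ \cite{ML}). This uses that $\cD$ has the colimits in question, namely coproducts of copies of $d$ (so that $d\cdot\hom_\cC(c,-)$ is a bona fide functor to $\cD$) and coequalizers; both hold in the cases we care about, $\cD=\cV ect$ and $\cD=\cS ets$. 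Since $F/\{\varphi_i\}$ is by construction the coequalizer in $\cF un(\cC,\cD)$ of a diagram built from $F$ and the natural transformations $\alpha_i\colon d\cdot\hom_\cC(c,-)\Rightarrow F$ induced by the $\varphi_i$, and a coequalizer is a particular colimit, applying $\mathrm{ev}_{c'}$ produces the coequalizer in $\cD$ of the evaluated diagram. As $\mathrm{ev}_{c'}(d\cdot\hom_\cC(c,-))=\hom_\cC(c,c')\times d$ by definition of that functor, the whole statement reduces to identifying the evaluated maps $(\alpha_i)_{c'}$.

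This identification is the one step that is not pure book-keeping. Using the explicit description of the extension of a morphism to a natural transformation given just before the lemma, $(\alpha_i)_{c'}\colon\coprod_{c\to c'}d\to F(c')$ is the composite $\coprod_{c\to c'}d\to\coprod_{c\to c'}F(c)\to F(c')$ whose first arrow is $\coprod\varphi_i$ and whose second arrow is the fold map acting on the summand indexed by $\psi\colon c\to c'$ by $F_\psi$. Hence on that summand $(\alpha_i)_{c'}$ is $F_\psi\circ\varphi_i=\widetilde\varphi_i(\psi)$; this is just the copower form of the Yoneda lemma. Plugging back in, $\mathrm{ev}_{c'}$ sends the defining coequalizer of $F/\{\varphi_i\}$ to the coequalizer in $\cD$ of the maps $\widetilde\varphi_i\colon\hom_\cC(c,c')\times d\to F(c')$, as claimed.

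I do not expect a genuine obstacle --- the lemma is formal once the set-up is in place. The two points that deserve a moment's care are: (i) pinning down what ``the coequalizer of the family $\{\alpha_i\}$'' means, namely the universal $q\colon F\Rightarrow Q$ with $q\circ\alpha_i=q\circ\alpha_j$ for all $i,j$, which is still a colimit, so the objectwise principle applies unchanged; and (ii) getting the direction of the fold map and of the naturality squares right in the computation of $(\alpha_i)_{c'}$ above, which is really the only place an error could creep in. Neither is a substantive difficulty for $\cD\in\{\cV ect,\cS ets\}$.
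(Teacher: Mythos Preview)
Your proposal is correct and follows essentially the same route as the paper: the paper's proof is the single line ``This follows immediately from Lemma~\ref{lem:colims-in-functors},'' i.e.\ exactly the pointwise-colimits principle you invoke. Your write-up simply spells out the identification of the evaluated natural transformations with the maps $\widetilde\varphi_i$, which the paper leaves implicit.
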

	\begin{proof}
		This follows immediately from Lemma \ref{lem:colims-in-functors}.
	\end{proof}

    	Let $\Gamma = \la g_1,\dots,g_n \mid r_0=e, r_1,\dots, r_m,\dots \ra$ be a finitely generated group.
	By Corollary \ref{cor:homs-in-H}, we can identify $\hom_\Ho([n],[n+1])$ with $F_n^{\times n+1}$.
	Let $\varphi_i\in \hom_\Ho([n],[n+1])$ be the morphism corresponding to the element $(g_1,\dots, g_n, r_i)\in F_n^{\times n+1}$.
	
	\begin{lemma}
		\label{lem:group-funct-as-quot}
		In the notation above, the functor $[\Gamma]\colon \Ho\to \cS ets$ is isomorphic to $h^{[n]}/\{\varphi_i\}$,
		where $h^{[n]}=\hom_\Ho([n],-)$.
	\end{lemma}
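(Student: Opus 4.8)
The plan is to describe both functors pointwise and identify them through the defining quotient of $\Gamma$. Write $F_n=\la g_1,\dots,g_n\ra$ and let $q\colon F_n\twoheadrightarrow\Gamma$ be the quotient homomorphism, whose kernel $N$ is the normal closure of $r_1,r_2,\dots$ in $F_n$. By Corollary~\ref{cor:functor-F-n-is-corepres} I identify $h^{[n]}$ with the functor $[F_n]\colon[m]\mapsto F_n^{\times m}$; since the functor structure on $[F_n]$ and on $[\Gamma]$ is assembled from the group operations and $q$ is a group homomorphism, $q$ induces a natural transformation $\eta\colon h^{[n]}\cong[F_n]\Rightarrow[\Gamma]$ with component $\eta_{[m]}\colon F_n^{\times m}\to\Gamma^{\times m}$ applying $q$ in each coordinate. (Equivalently, by Yoneda as in Proposition~\ref{prop:tensor-for-represen-functors}, $\eta$ corresponds to the tuple of images in $\Gamma$ of the generators.) The goal is to prove that $\eta$ descends to an isomorphism $h^{[n]}/\{\varphi_i\}\simeq[\Gamma]$.

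First I would make $h^{[n]}/\{\varphi_i\}$ explicit. By Lemma~\ref{lem:quot-functor}, applied with $\cC=\Ho$, $\cD=\cS ets$, $c=[n+1]$, $d$ a one-point set, and each $\varphi_i$ viewed as an element of $h^{[n]}([n+1])=\hom_\Ho([n],[n+1])$, the set $(h^{[n]}/\{\varphi_i\})([m])$ is the coequalizer in $\cS ets$ of the maps $\widetilde\varphi_i\colon\hom_\Ho([n+1],[m])\to\hom_\Ho([n],[m])$, $\psi\mapsto\psi\circ\varphi_i$ --- that is, $\hom_\Ho([n],[m])\cong F_n^{\times m}$ modulo the equivalence relation $\sim$ generated by $\psi\circ\varphi_i\sim\psi\circ\varphi_j$ over all $i,j$ and all $\psi$. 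Using Corollary~\ref{cor:homs-in-H}, write $\psi$ as a tuple $(u_1,\dots,u_m)$ of words in $F_{n+1}=\la g_1,\dots,g_n,t\ra$ and recall $\varphi_i\leftrightarrow(g_1,\dots,g_n,r_i)\in F_n^{\times(n+1)}$; since composition in $\Ho$ is substitution of words, $\widetilde\varphi_i(\psi)=\big(u_1(g_1,\dots,g_n,r_i),\dots,u_m(g_1,\dots,g_n,r_i)\big)$, i.e. each $u_k$ with its last generator $t$ set to $r_i$, and in particular $\widetilde\varphi_0(\psi)$ sets $t$ to $e$ since $r_0=e$. Because $q(r_i)=e$ in $\Gamma$, the composites $F_{n+1}\to\Gamma$ of $q$ with $\varphi_i$ and with $\varphi_0$ coincide (both send $g_j\mapsto q(g_j)$, $t\mapsto e$), so $\eta_{[m]}$ identifies $\widetilde\varphi_i(\psi)$ with $\widetilde\varphi_0(\psi)$; hence $\eta$ coequalizes all the $\widetilde\varphi_i$ and factors through a natural transformation $\bar\eta\colon h^{[n]}/\{\varphi_i\}\Rightarrow[\Gamma]$, surjective on each $[m]$ since $q$ is surjective.

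The only real point is injectivity of each $\bar\eta_{[m]}$. Tuples $(w_1,\dots,w_m)$ and $(w'_1,\dots,w'_m)$ in $F_n^{\times m}$ have equal image under $\eta_{[m]}$ precisely when $w'_k=w_k n_k$ with $n_k\in N$ for every $k$, so it suffices to show that replacing a single coordinate $w_k$ by $w_k\cdot c\,r_i^{\pm1}c^{-1}$ (any $c\in F_n$, any relator $r_i$) is an instance of $\sim$, as every element of $N$ is a product of such conjugates and $\sim$ is transitive. For this take $\psi=(u_1,\dots,u_m)$ with $u_l$ equal to $w_l$ viewed inside $F_{n+1}$ (so not involving $t$) for $l\neq k$, and $u_k=w_k\cdot c\,t^{\pm1}c^{-1}$. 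Then $\widetilde\varphi_0(\psi)=(w_1,\dots,w_m)$ because $t\mapsto e$, while $\widetilde\varphi_i(\psi)$ agrees with it in every coordinate except the $k$-th, which becomes $w_k\cdot c\,r_i^{\pm1}c^{-1}$; hence the two tuples are $\sim$-equivalent. This gives injectivity, so each $\bar\eta_{[m]}$ is a bijection; naturality is inherited from $\eta$, and $\bar\eta$ is the asserted isomorphism $[\Gamma]\simeq h^{[n]}/\{\varphi_i\}$.

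Beyond this bookkeeping I anticipate no serious obstacle: one just has to handle the substitution description of composition in $\Ho$ from Corollary~\ref{cor:homs-in-H} with care, and verify that the single-relator moves above generate exactly the coordinatewise kernel relation of $q$. It is worth noting that the auxiliary relator $r_0=e$ in the presentation is precisely what makes the argument work --- it provides the common ``base morphism'' $\varphi_0$ against which one relator at a time can be introduced as a coequalizer identification.
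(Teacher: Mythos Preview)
Your proof is correct and follows essentially the same approach as the paper's: both unravel the coequalizer pointwise via Lemma~\ref{lem:quot-functor} and Corollary~\ref{cor:homs-in-H}, identify the maps $\widetilde\varphi_i$ as substitution $t\mapsto r_i$, and verify that the resulting equivalence relation on $F_n^{\times m}$ is exactly the coordinatewise normal-closure relation defining $\Gamma^{\times m}$. Your treatment of the injectivity step is more explicit than the paper's (you exhibit a concrete $\psi$ witnessing each elementary move $w_k\mapsto w_k\cdot c\,r_i^{\pm1}c^{-1}$, whereas the paper just asserts the identification for $m=1$), and you correctly flag that the auxiliary relator $r_0=e$ is what supplies the base map $\varphi_0$ needed for this argument.
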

	\begin{proof}
		For any $[m]\in\ob(\Ho)$ there is a natural surjection of sets $h^{[n]}([m])\simeq F_n^m\onto [\Gamma]([m])=\Gamma^m$.
		We need to prove that it factors through the coequalizer $h^{[n]}/\{\varphi_i\}([m])$, and the
		induced map is actually an isomorphism.
		
		For notational simplicity, let's only consider the case $m=1$, the general case being completely analogous.
		Unraveling the definition of $h^{[n]}/\{\varphi_i\}$,
		we need to prove that the coequalizer of the maps
		\[
			\coprod\limits_{\omega \in F_{n+1}} F_n^n \stackrel{\varphi_i}{\longrightarrow} F_n
		\]
		is isomorphic to $\Gamma$, where on the component corresponding to a word
		$\omega= \omega(x_1,\dots,x_{n+1})\in F_{n+1}=\langle x_1,\dots,x_{n+1} \rangle$
		the map $\varphi_i$ sends a tuple of words $(y_1,\dots,y_n)$ with $y_i(g_1,\dots,g_n)\in F_n$ to the word
		\[
		\omega(y_1(g_1,\dots,g_n),\dots,y_n(g_1,\dots,g_n),r_i(g_1,\dots,g_n))\in F_n
		\]
		The coequalizer is the quotient of $F_n$ identifying all the words above. The set of such words
		is the normal subgroup of $F_n$ (viewed as a set) generated by the relations $\{r_i\}$.
		The identification means that we set identify every element of the normal subgroup with identity,
		which is the description of $\Gamma$.
	\end{proof}

		\begin{lemma}
			\label{lem:TV-functor}
			Let $V$ be a finite-dimensional vector space. There is an isomorphism of functors from $\Ho$ to $\cV ect$:
			\[
			[TV]\simeq \bigoplus\limits_{n=0}^{\infty} V^{\otimes n}\otimes_{S_n} \left( h^{[n]}\Big/ \{\varphi_i,\psi_j, \nu_j \} \right)
			\]
			where the morphisms $\varphi_i,\psi_j,\nu_j\colon k\to \hom_\Ho([n],[n+1]),\, i\geq 0$ are given by $\varphi_0=0$ and
			\begin{align*}
				\varphi_i &=\id^{\otimes i-1}\otimes \Delta\otimes\id^{\otimes n-i}-\id^{\otimes i-1}\otimes \eta \otimes \otimes\id^{n-i+1}-
				\id^{\otimes i}\otimes \eta \otimes \id^{n-i}\\
				\psi_i &= \id^{\otimes i-1}\otimes S\otimes\id^{\otimes n-i} + \id^{\otimes n} \\
                \nu_i &= \id^{\otimes i-1}\otimes \varepsilon \otimes\id^{\otimes n-i}
			\end{align*}
		\end{lemma}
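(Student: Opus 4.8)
I would build a natural transformation $\Theta$ from the right-hand side to $[TV]$ and then reduce everything to a single rewriting statement about the functor $h^{[n]}/\{\varphi_i,\psi_j,\nu_j\}$.

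\smallskip
\emph{The comparison map.} Since $[TV]$ is strict monoidal, $[TV]([m])=(TV)^{\otimes m}$; the algebra $TV=\bigoplus_{n\ge 0}V^{\otimes n}$ is graded by tensor degree, and its degree-one summand $V$ consists of primitive elements, so $\Delta v=v\otimes 1+1\otimes v$, $Sv=-v$ and $\varepsilon v=0$ for $v\in V$. A direct inspection shows that $\varphi_i$, $\psi_j$, $\nu_j$ express exactly these three identities in the $i$-th (respectively $j$-th) tensor slot. Hence, for each $v_1\otimes\dots\otimes v_n\in V^{\otimes n}$, the natural transformation $h^{[n]}\Rightarrow[TV]$ attached by Yoneda (Proposition~\ref{prop:tensor-for-represen-functors}) to $v_1\otimes\dots\otimes v_n\in[TV]([n])$ annihilates $\varphi_i,\psi_j,\nu_j$, and therefore factors through $h^{[n]}/\{\varphi_i,\psi_j,\nu_j\}$. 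The factorization is $S_n$-equivariant — the $S_n$-action on $h^{[n]}$ coming from permuting the factors of $[1]^{\otimes n}$ only permutes the families $\{\varphi_i\},\{\psi_j\},\{\nu_j\}$ — and assembling over $n$ produces
\[
\Theta\colon\ \bigoplus_{n\ge 0}V^{\otimes n}\otimes_{S_n}\bigl(h^{[n]}/\{\varphi_i,\psi_j,\nu_j\}\bigr)\ \longrightarrow\ [TV].
\]

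\smallskip
\emph{Reduction to a spanning statement.} Let $\mathcal{A}(n,m)$ be the set of arrangements of $\{1,\dots,n\}$ into $m$ linearly ordered lists, on which $S_n$ acts freely with $\binom{n+m-1}{m-1}$ orbits; each $a\in\mathcal{A}(n,m)$ determines a morphism $\chi_a\in\hom_\Ho([n],[m])$, namely the one concatenating, in each output slot, the generators assigned to it in the prescribed order. Applying $[TV]$ to $\chi_a$ and a tensor of basis vectors of $V$ yields a monomial of $(TV)^{\otimes m}$, and every basis monomial arises this way, so $\Theta_{[m]}$ is onto. Moreover, specializing to $V=kX_n$ and restricting to the component multilinear in $x_1,\dots,x_n$ gives a well-defined linear map $\bar\Theta\colon\bigl(h^{[n]}/\{\varphi_i,\psi_j,\nu_j\}\bigr)([m])\to k[\mathcal{A}(n,m)]$ with $\bar\Theta([\chi_a])=a$; in particular the classes $[\chi_a]$ are linearly independent in the quotient. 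It therefore suffices to prove that the $[\chi_a]$ also \emph{span} $\bigl(h^{[n]}/\{\varphi_i,\psi_j,\nu_j\}\bigr)([m])$: granting this, $\bar\Theta$ is an isomorphism, $h^{[n]}/\{\varphi_i,\psi_j,\nu_j\}([m])\simeq k[\mathcal{A}(n,m)]$ as $S_n$-modules, whence $V^{\otimes n}\otimes_{S_n}\bigl(h^{[n]}/\{\varphi_i,\psi_j,\nu_j\}\bigr)([m])\simeq\bigoplus_{n_1+\dots+n_m=n}V^{\otimes n_1}\otimes\dots\otimes V^{\otimes n_m}$, which is the degree-$n$ part of $[TV]([m])$, and the surjection $\Theta_{[m]}$ is an isomorphism degree by degree.

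\smallskip
\emph{The spanning statement and the main obstacle.} By Lemma~\ref{lem:quot-functor}, $\bigl(h^{[n]}/\{\varphi_i,\psi_j,\nu_j\}\bigr)([m])$ is $k[F_n^{\times m}]$ modulo the span of $\psi\circ\varphi_i$, $\psi\circ\psi_j$, $\psi\circ\nu_j$; writing these out via Corollary~\ref{cor:homs-in-H} one sees that $\varphi_i$ lets one replace a repeated occurrence of $x_i$ in a tuple of words by two terms in which that occurrence is split off next to a unit, $\nu_j$ kills every term in which some $x_j$ is absent, and $\psi_j$ rewrites an inverse occurrence $x_i^{-1}$ as $-x_i$. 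Iterating these rewrites reduces an arbitrary $m$-tuple of words to a $k$-linear combination of arrangements. Making this precise — ordering monomials so that each relation strictly decreases a word, and checking confluence — is the one genuinely non-formal step, and is the exact analogue for $TV$ of the normal-form computation in the proof of Lemma~\ref{lem:group-funct-as-quot}; the remaining bookkeeping above is purely formal. (Alternatively, one can first establish $\hom^\Ho\bigl(h^{[n]}/\{\varphi_i,\psi_j,\nu_j\},[H]\bigr)\simeq\operatorname{Prim}(H)^{\otimes n}$ for every cocommutative Hopf algebra $H$ — the $\varphi_i$ cutting out the primitives slot by slot while $\psi_j,\nu_j$ hold automatically there — but passing from this to the value at $[m]$ requires the same combinatorial input.)
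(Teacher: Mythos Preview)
Your proposal is correct and follows essentially the same strategy as the paper: reduce an arbitrary morphism in $\hom_\Ho([n],[m])$ to a normal form consisting of a permutation (for $m=1$, an element of $S_n$; for general $m$, an arrangement) followed by multiplication, using the relations $\varphi_i,\psi_j,\nu_j$ to eliminate comultiplications, antipodes, and units. The paper only treats $m=1$ and is terser about both the construction of the comparison map and the verification that the normal forms are independent (it just writes ``it is easy to see that this is an isomorphism''); you are more explicit on both points, building $\Theta$ via Yoneda and flagging confluence as the genuine combinatorial content, but the underlying argument is the same.
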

	\begin{proof}

		For simplicity, let's only prove $[TV]([m])\simeq \bigoplus V^{\otimes n}\otimes_{S_n} \left( h^{[n]}/ \{\varphi_i \}\right)([m])$
		in the case $m=1$. In this case, $[TV]([1])=TV=\bigoplus_n V^{\otimes n}\simeq \bigoplus_n V^{\otimes n}\otimes_{S_n} k[S_n]$.
		Therefore, it is enough to show that $k[S_n]\simeq h^{[n]}/ \{\varphi_i \}([1])$. %\simeq k[F_n]/\{\varphi_i \}$.
		We emphasize that this is only isomorphism between vector spaces, since the right side does not have natural algebra structure.

		Any morphism $\alpha\colon [n]\to [1]$ in $\Ho$ can be represented as a composition of a certain number of comultiplications,
		followed by antipodes, then a permutation, and finally followed by the multiplication.
		
		The relations $\varphi_i\sim 0$ allow us to remove all comultiplications, and after that all antipodes. Thus, a morphism in $\hom_\Ho([n],[1])/\{\varphi_i \}$
        is represented (up to sign) by a permutation, followed by the multiplication, i.e. by an element of $k[S_n]$. It is easy to see that this is an isomorphism.
	\end{proof}
	
	\begin{proposition}
			\label{prop:alg-structure-on-tensor-vect}
			Let $F\colon \Ho\to \cV ect$ and $G\colon \Ho^{op}\to \cV ect$ be two lax monoidal functors.
			Then $G\otimes_\Ho F$ has a natural structure of a commutative algebra.
	\end{proposition}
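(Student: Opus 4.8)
The plan is to equip $G\otimes_\Ho F$ with the multiplication coming from the two lax monoidal structures, and then verify the ring axioms; associativity and the existence of a unit will be formal, and the only real content is commutativity, which comes from the symmetry of the PROP $\Ho$. Since $[n]\otimes[m]=[n+m]$ in $\Ho$, a lax monoidal structure on $F$ supplies natural maps $\mu_F\colon F([n])\otimes F([m])\to F([n+m])$ and a unit $k\to F([0])$, and likewise $G$ (as a functor on $\Ho^{op}$) supplies $\mu_G\colon G([n])\otimes G([m])\to G([n+m])$ and a unit $k\to G([0])$. On the $(n,m)$-summand $\bigl(G([n])\otimes F([n])\bigr)\otimes\bigl(G([m])\otimes F([m])\bigr)$ of $\bigl(\bigoplus_{p}G([p])\otimes F([p])\bigr)^{\otimes 2}$ I set
\[
(g\otimes v)\cdot(g'\otimes v')\ =\ \mu_G(g\otimes g')\ \otimes\ \mu_F(v\otimes v')\ \in\ G([n+m])\otimes F([n+m]),
\]
then compose with the projection to $G\otimes_\Ho F$ and extend bilinearly; the class of $1_G\otimes1_F$, where $1_G\in G([0])$ and $1_F\in F([0])$ are the images of $1$ under the lax units, will be the proposed unit.

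The hard part will be showing that this descends to a well-defined map on $\bigl(G\otimes_\Ho F\bigr)^{\otimes 2}$, i.e.\ that it respects the defining relations of the coend in each slot. It suffices to test one elementary relation at a time: fix $f\in\hom_\Ho([m],[n])$, $v\in G([n])$, $w\in F([m])$, and an auxiliary homogeneous element $g'\otimes w'\in G([p])\otimes F([p])$, and compare the products of $g'\otimes w'$ with the two sides $f^\ast(v)\otimes w$ and $v\otimes f_\ast(w)$ of the relation. Applying the naturality of $\mu_G$ and of $\mu_F$ with respect to the morphism $f\otimes\id_{[p]}\colon[m+p]\to[n+p]$ of $\Ho$ rewrites these two products as exactly the two sides of the elementary coend relation attached to $f\otimes\id_{[p]}$, so they coincide in $G\otimes_\Ho F$; the computation for the other slot is identical with $\id_{[p]}\otimes f$ in place of $f\otimes\id_{[p]}$. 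Hence the product on $G\otimes_\Ho F$ is well defined, and it is $k$-bilinear because it is assembled from $k$-linear maps.

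Associativity and unitality then follow formally from the associativity and unit coherence axioms for lax monoidal functors, using that $\otimes$ on $\Ho$ and on $\cV ect$ is strict, so that all associators and unitors are identities.

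For commutativity I would use that $\Ho$ is symmetric monoidal, its braiding assembled from $\tau$, and that the lax structures on $F$ and $G$ are compatible with this symmetry --- which is clear for all the functors occurring above, whose structure maps are inclusions of invariants inside symmetric tensor powers. Write $\sigma\colon[m+n]\to[n+m]$ for the block transposition realizing the braiding of $[m]\otimes[n]$ with $[n]\otimes[m]$. For $x=g\otimes v$ in degree $n$ and $y=g'\otimes v'$ in degree $m$, the symmetry axioms give $\mu_G(g'\otimes g)=G(\sigma)\bigl(\mu_G(g\otimes g')\bigr)$ in $G([m+n])$ and $F(\sigma)\bigl(\mu_F(v'\otimes v)\bigr)=\mu_F(v\otimes v')$ in $F([n+m])$; plugging these into the elementary coend relation for $\sigma$ (applied to $\mu_G(g\otimes g')\in G([n+m])$ and $\mu_F(v'\otimes v)\in F([m+n])$) yields
\begin{align*}
y\cdot x\ &=\ \mu_G(g'\otimes g)\otimes\mu_F(v'\otimes v)\ =\ G(\sigma)\bigl(\mu_G(g\otimes g')\bigr)\otimes\mu_F(v'\otimes v)\\
&\sim\ \mu_G(g\otimes g')\otimes F(\sigma)\bigl(\mu_F(v'\otimes v)\bigr)\ =\ x\cdot y
\end{align*}
in $G\otimes_\Ho F$. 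Finally, naturality of the whole construction in $F$ and $G$ is immediate from the formula defining the product.
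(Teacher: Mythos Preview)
Your proposal is correct and follows essentially the same route as the paper. The paper simply invokes the general Proposition in the appendix (Proposition~\ref{prop:alg-structure-on-tensor}) and writes down the same multiplication map $\bigl(G([n])\otimes F([n])\bigr)\otimes\bigl(G([m])\otimes F([m])\bigr)\to G([n+m])\otimes F([n+m])$; you have spelled out in more detail both the well-definedness check (via naturality of the lax structure with respect to $f\otimes\id_{[p]}$) and the commutativity argument (via the coend relation for the block permutation $\sigma$), which the paper leaves implicit. Your observation that commutativity requires the lax monoidal structures to be symmetric is exactly the hypothesis ``the maps $\varphi_{cc'}$ and $\psi_{cc'}$ respect the braiding'' in the paper's general proposition, an assumption that is tacit in the statement of Proposition~\ref{prop:alg-structure-on-tensor-vect} itself.
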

	\begin{proof}
		This is a particular case of a more general fact \ref{prop:alg-structure-on-tensor}. The multiplication is
		induced by the lax monoidal structures of $F$ and $G$:
		\[
		\big(G([n])\otimes F([n]) \big) \otimes \big( G([m])\otimes F([m]) \big) \simeq \big( G([n])\otimes G([m])\big) \otimes \big( F([m])\otimes F([m])\big) \to G([m+n])\otimes F([m+n])
		\]
	\end{proof}
	
	\begin{lemma}
		For a lax monoidal contravariant functor $G\colon \Ho^{op}\to \cV ect$,
		the isomorphism
		$G\otimes_\Ho [F_n]\simeq G([n])$ of Proposition \ref{prop:tensor-for-represen-functors} is an isomorphism of algebras.
	\end{lemma}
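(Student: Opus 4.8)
The plan is to trace through the isomorphism $G\otimes_\Ho [F_n]\simeq G([n])$ from Proposition~\ref{prop:tensor-for-represen-functors} and check that it intertwines the two product structures. First I would recall how that isomorphism is constructed: by Corollary~\ref{cor:functor-F-n-is-corepres} the functor $[F_n]$ is the corepresentable functor $h^{[n]}=\hom_\Ho([n],-)$, and for any contravariant $G$ the coend $G\otimes_\Ho h^{[n]}$ collapses, by the co-Yoneda lemma (Lemma~\ref{lem:tensor-with-corepres-functor}), onto $G([n])$ via the map sending the class of $v\otimes f$, with $v\in G([m])$ and $f\in\hom_\Ho([n],[m])$, to $G(f)(v)=f^\ast(v)\in G([n])$; the inverse sends $v\in G([n])$ to the class of $v\otimes \id_{[n]}$. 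So concretely, representing an element of $G\otimes_\Ho[F_n]$ by a pair $(v,f)$ with $f\in\hom_\Ho([n],[m])$, the isomorphism is $[v\otimes f]\mapsto f^\ast v$.

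Next I would unwind the multiplication on $G\otimes_\Ho[F_n]$ coming from Proposition~\ref{prop:alg-structure-on-tensor-vect}. Here $[F_n]$ carries its strict monoidal structure $[F_n]([m])\times[F_n]([m'])=F_n^{\times m}\times F_n^{\times m'}=F_n^{\times(m+m')}=[F_n]([m+m'])$, which under the identification $[F_n]=h^{[n]}$ is exactly $\hom_\Ho([n],[m])\times\hom_\Ho([n],[m'])\to\hom_\Ho([n],[m+m'])$ given by $(f,g)\mapsto (f\otimes g)\circ\Delta^{(m+m'-1)}_{[n]}$ — i.e.\ first comultiply $[n]$ into $[n]\otimes[n]$ via the iterated coproduct, then apply $f$ and $g$. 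Thus the product of $[v\otimes f]$ and $[w\otimes g]$, with $v\in G([m])$, $w\in G([m'])$, is represented by $\big(\nabla_G(v\otimes w),\, (f\otimes g)\circ\Delta_{[n]}\big)$, where $\nabla_G\colon G([m])\otimes G([m'])\to G([m+m'])$ is the lax monoidal structure map of $G$. Applying the isomorphism, this maps to
\[
\big((f\otimes g)\circ\Delta_{[n]}\big)^\ast\,\nabla_G(v\otimes w)
 \;=\; \Delta_{[n]}^\ast\circ (f\otimes g)^\ast\,\nabla_G(v\otimes w)
 \;=\; \Delta_{[n]}^\ast\,\nabla_G\big(f^\ast v\otimes g^\ast w\big),
\]
using functoriality of $G$ and naturality of $\nabla_G$ in the last step.

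It then remains to observe that $\Delta_{[n]}^\ast\circ\nabla_G\colon G([n])\otimes G([n])\to G([n])$ is precisely the algebra multiplication on $G([n])$ — this is exactly what "lax monoidal contravariant functor from the PROP of cocommutative Hopf algebras" means: $G$ sends the comultiplication $\Delta_{[n]}\colon[n]\to[n]\otimes[n]$ (the $n$-fold tensor power of $\Delta\colon[1]\to[2]$, suitably shuffled) to the structure map that, composed with $\nabla_G$, equips $G([n])$ with its commutative algebra structure; in the motivating example $G=[k(G)]$ this is the usual pointwise product on $k(G^n)$ coming from the diagonal $G^n\to G^n\times G^n$. So the image of the product equals the product of the images, and the isomorphism is one of algebras; compatibility of units is immediate since the unit of $G\otimes_\Ho[F_n]$ is represented by $1\in G([0])=k$ paired with the unique morphism $[n]\to[0]$, which maps to $\eta_{[n]}^\ast(1)=1_{G([n])}$.

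The only genuinely delicate point — and hence the main obstacle — is pinning down exactly which morphism in $\Ho$ realizes the strict monoidal structure on $h^{[n]}$ under the identification $[F_n]\simeq h^{[n]}$, and checking that $G$ applied to it gives the multiplication on $G([n])$ rather than, say, an opposite or twisted version. Because $\Ho^{op}\simeq\cG r_{ff}$ (Theorem~\ref{thm:folklore}), the monoidal product on free groups is the free product $F_n\ast F_n\cong F_{2n}$ with the diagonal inclusion, and one must verify that the corresponding morphism of $\Ho$ is the iterated coproduct on $[n]$; once this bookkeeping with Corollary~\ref{cor:homs-in-H} is done carefully, everything else is formal naturality. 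I would handle this by explicitly writing the strict monoidal structure on $h^{[n]}$ as the composite $\hom_\Ho([n],[m])\times\hom_\Ho([n],[m'])\xrightarrow{\ \otimes\ }\hom_\Ho([n]\otimes[n],[m]\otimes[m'])\xrightarrow{\ -\circ\Delta_{[n]}\ }\hom_\Ho([n],[m+m'])$ and matching it term-by-term against the free-product description on the group side.
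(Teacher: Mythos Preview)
Your proposal is correct and follows essentially the same approach as the paper. The paper's proof simply cites the general Lemma~\ref{lem:isom-of-algebras-from-tensor} (applied with $x=[n]$, using that $[n]$ is a counital coalgebra object via $\Delta^n$ and $\varepsilon^n$), whose content you have unpacked explicitly: the lax monoidal structure on $h^{[n]}$ comes from the coalgebra structure on $[n]$, the algebra structure on $G([n])$ is $\Delta_{[n]}^\ast\circ\nabla_G$, and the co-Yoneda isomorphism intertwines the two products by naturality of $\nabla_G$. The ``delicate point'' you flag --- that the strict monoidal structure on $[F_n]$ matches, under $[F_n]\simeq h^{[n]}$, the lax structure induced by $\Delta_{[n]}$ --- is exactly what the paper's Lemma~\ref{lem:corepres-funct-is-lax-monoid} packages abstractly, and the paper does not spell out this compatibility any more than you do.
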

	\begin{proof}
		Here $G(n)$ has a natural algebra structure induced by the morphism $\Delta^n\colon [n]\to [2n]$ in $\Ho$ and the lax structure on the functor $G$. Once again, this result follows from a more general one, see Lemma \ref{lem:isom-of-algebras-from-tensor}.
	\end{proof}

    \section{Representation and character varieties}

	Let $G$ be an affine algebraic group scheme, and denote its regular ring by $k(G)$.
	The ring $k(G)$ has a natural structure of a commutative Hopf algebra, and therefore defines
	a contraviariant functor from $\Ho^{op}\to \cV ect$, sending $[n]\in\ob(\Ho)$ to $k(G^{\times n})\simeq k(G)^{\otimes n}$.
	We denote this functor by $[k(G)]$.
	
	\begin{theorem} There are natural isomorphisms of commutative algebras
		\label{thm:rep-var-groups-as-tensor-prod}
		\begin{enumerate}
			\item $[k(G)]\otimes_\Ho k[\Gamma]\simeq k(\rep(\Gamma,G))$;
			\item if $\Gamma_1,\Gamma_2$ are two discrete groups, then
			$\hom^\Ho(k[\Gamma_1],k[\Gamma_2])\simeq k[\rep(\Gamma_1,\Gamma_2)]$, where $k[\rep(\Gamma_1,\Gamma_2)]$
			denotes the algebra of functions with finite support on the discrete set $\rep(\Gamma_1,\Gamma_2)$.
		\end{enumerate}
		
	\end{theorem}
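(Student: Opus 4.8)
The plan is to fix a (possibly infinite) presentation $\Gamma = \la g_1,\dots,g_n \mid r_0 = e, r_1, r_2,\dots \ra$ and to run everything through Lemma~\ref{lem:group-funct-as-quot}, which exhibits the functor $[\Gamma]\colon\Ho\to\cS ets$ as the coequalizer $h^{[n]}/\{\varphi_i\}$ of the natural transformations induced by the morphisms $\varphi_i\in\hom_\Ho([n],[n+1])$ corresponding, via Corollary~\ref{cor:homs-in-H}, to the tuples $(g_1,\dots,g_n,r_i)\in F_n^{\times(n+1)}$. Both isomorphisms are then obtained by propagating this coequalizer through the relevant bifunctor — the coend $[k(G)]\otimes_\Ho(-)$ for part (1), the hom-functor $\hom^\Ho(-,k[\Gamma_2])$ for part (2) — and evaluating the outcome on the corepresentable pieces using Proposition~\ref{prop:tensor-for-represen-functors} (together with $h^{[m]}\simeq[F_m]$ from Corollary~\ref{cor:functor-F-n-is-corepres}).

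For part (1): since $k[\Gamma]=\operatorname{Span}_k\circ[\Gamma]$, by the convention following \eqref{eq:tensor} we must compute $[k(G)]\otimes_\Ho[\Gamma]$. The coend $[k(G)]\otimes_\Ho(-)$ is a colimit, hence preserves colimits, so it takes $[\Gamma]=h^{[n]}/\{\varphi_i\}$ to the coequalizer, in commutative algebras, of a family of maps $[k(G)]\otimes_\Ho h^{[n+1]}\rightrightarrows[k(G)]\otimes_\Ho h^{[n]}$. By Proposition~\ref{prop:tensor-for-represen-functors} (and the lemma immediately after it, which makes these identifications algebra isomorphisms for the structure of Proposition~\ref{prop:alg-structure-on-tensor-vect}) one has $[k(G)]\otimes_\Ho h^{[m]}\simeq[k(G)]([m])=k(G^m)$, and chasing the coend relation $f^\ast(v)\otimes w\sim v\otimes f_\ast(w)$ shows the map induced by $\varphi_i$ is exactly $[k(G)](\varphi_i)\colon k(G^{n+1})\to k(G^n)$. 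I would then identify this map geometrically: translating through the equivalences $\Ho\simeq\cG r_{ff}^{op}$, $\Ho^{op}\simeq\cG r_{ff}$ of Theorem~\ref{thm:folklore}, the functor $[k(G)]$ becomes $F_m\mapsto k(\hom_{\cG r}(F_m,G))$, with a homomorphism $\theta\colon F_m\to F_\ell$ acting by pullback along the precomposition map $\hom(F_\ell,G)\to\hom(F_m,G)$; since $\varphi_i$ corresponds to the homomorphism $F_{n+1}\to F_n$ sending generators to $g_1,\dots,g_n,r_i$, the map $[k(G)](\varphi_i)$ is pullback along $G^n\to G^{n+1}$, $a=(a_1,\dots,a_n)\mapsto(a_1,\dots,a_n,r_i(a))$ (with $r_0(a)\equiv e$). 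Finally, the coequalizer of the $[k(G)](\varphi_i)$ against $[k(G)](\varphi_0)$ in commutative algebras is $k(G^n)/J$ with $J$ the ideal generated by the differences; as the two maps agree on the subalgebra $k(G^n)\subset k(G^{n+1})$ and differ on the last tensor factor by $h\mapsto h\circ r_i-h(e)$, one gets $J=\sum_i r_i^{\ast}(\mathfrak m_e)\,k(G^n)$, which is precisely the defining ideal of $\rep(\Gamma,G)\hookrightarrow\rep(F_n,G)=G^n$. Proposition~\ref{prop:alg-structure-on-tensor-vect} is what guarantees that the vector-space coend is already closed under multiplication by $k(G^n)$, so the coequalizers in $\cV ect$ and in commutative algebras agree; independence of the presentation is automatic since $[k(G)]\otimes_\Ho[\Gamma]$ is intrinsic.

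For part (2) the argument is formally dual. Writing $k[\Gamma_i]=\operatorname{Span}_k\circ[\Gamma_i]$ and using that $\operatorname{Span}_k$ preserves colimits, $k[\Gamma_1]=\operatorname{coeq}\bigl(\operatorname{Span}_k h^{[n+1]}\rightrightarrows\operatorname{Span}_k h^{[n]}\bigr)$; applying the contravariant $\hom^\Ho(-,k[\Gamma_2])$, which sends colimits to limits, gives $\hom^\Ho(k[\Gamma_1],k[\Gamma_2])=\operatorname{eq}\bigl(\hom^\Ho(\operatorname{Span}_k h^{[n]},k[\Gamma_2])\rightrightarrows\hom^\Ho(\operatorname{Span}_k h^{[n+1]},k[\Gamma_2])\bigr)$. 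By the $k$-linear Yoneda lemma (the linear analogue of the first part of Proposition~\ref{prop:tensor-for-represen-functors}) $\hom^\Ho(\operatorname{Span}_k h^{[m]},k[\Gamma_2])\simeq k[\Gamma_2]([m])=k[\Gamma_2^m]$, and — by the same translation through $\Ho\simeq\cG r_{ff}^{op}$ — the map induced by $\varphi_i$ is $k[\Gamma_2](\varphi_i)\colon k[\Gamma_2^n]\to k[\Gamma_2^{n+1}]$, $\delta_{(\gamma_1,\dots,\gamma_n)}\mapsto\delta_{(\gamma_1,\dots,\gamma_n,r_i(\gamma))}$ (and $\delta_\gamma\mapsto\delta_{(\gamma,e)}$ for $\varphi_0$). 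A finite sum $\sum_\gamma c_\gamma\delta_\gamma$ lies in the equalizer iff $\sum_\gamma c_\gamma\delta_{(\gamma,r_i(\gamma))}=\sum_\gamma c_\gamma\delta_{(\gamma,e)}$ in $k[\Gamma_2^{n+1}]$ for every $i$; since $\gamma\mapsto(\gamma,r_i(\gamma))$ and $\gamma\mapsto(\gamma,e)$ coincide only when $\gamma=\gamma'$ and $r_i(\gamma)=e$, comparing coefficients forces $c_\gamma=0$ whenever some $r_i(\gamma)\neq e$. Hence the equalizer is the span of $\{\delta_\gamma:\gamma\in\Gamma_2^n,\ r_i(\gamma)=e\ \forall i\}$, i.e.\ the functions with finite support on $\rep(\Gamma_1,\Gamma_2)\subset\Gamma_2^n$; under this isomorphism the evident (pointwise) commutative algebra structure on $k[\rep(\Gamma_1,\Gamma_2)]$ transports to $\hom^\Ho(k[\Gamma_1],k[\Gamma_2])$, and verifying that it is the natural one there — the monoidal natural transformations $k[\rho]$, $\rho\in\rep(\Gamma_1,\Gamma_2)$, becoming orthogonal idempotents — is a direct check.

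I expect the crux to be the two "identification" steps: proving that, after passing through Proposition~\ref{prop:tensor-for-represen-functors} (resp.\ Yoneda), the map induced by $\varphi_i$ is genuinely $[k(G)](\varphi_i)$ (resp.\ $k[\Gamma_2](\varphi_i)$), which requires carefully tracking the coend relations and naturality squares, and then correctly transporting the functors $[k(G)]$ and $k[\Gamma_2]$ across the folklore equivalence of Theorem~\ref{thm:folklore} to recognize $\varphi_i$ as "append the relator $r_i$". Once these are in place, the remaining points — that the coequalizer must be formed in commutative algebras rather than $\cV ect$ (handled by Proposition~\ref{prop:alg-structure-on-tensor-vect}), and that the resulting ideal is the defining ideal of $\rep(\Gamma,G)$ — are routine.
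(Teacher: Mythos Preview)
Your proposal is correct and uses the same structural ingredients as the paper --- Lemma~\ref{lem:group-funct-as-quot} to present $[\Gamma]$ as a coequalizer of corepresentables, Proposition~\ref{prop:tensor-for-represen-functors} to evaluate on corepresentables, and colimit-preservation of the coend (Lemma~\ref{lem:tensor-as-coeq}) to propagate the coequalizer. The one genuine difference is how part~(1) is finished: the paper verifies the defining adjunction $\hom_{\cC om\cA lg}\bigl([k(G)]\otimes_\Ho[\Gamma],B\bigr)\simeq\hom_{\cG r}\bigl(\Gamma,G(B)\bigr)$ via a chain of limit/colimit manipulations, whereas you compute the coequalizer concretely as $k(G^n)/J$ and identify $J$ as the defining ideal of $\rep(\Gamma,G)\subset G^n$. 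Your route is slightly more hands-on (you have to argue carefully that the $\cV ect$-coequalizer is already an algebra quotient, which you correctly flag), while the paper's adjunction argument sidesteps this by characterizing the target via its functor of points; both are short once the lemmas are in place. For part~(2), your explicit equalizer computation is more detailed than the paper's, which simply reduces to $\Gamma_1$ free and invokes Yoneda; note that the paper defines the algebra structure on $\hom^\Ho(k[\Gamma_1],k[\Gamma_2])$ intrinsically as a convolution product, so you should name that structure rather than merely transport pointwise multiplication from the target.
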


	\begin{proof}
		
		The structure of a commutative algebra on the vector space $[k(G)]\otimes_\Ho [\Gamma]$ is given by
		Proposition \ref{prop:alg-structure-on-tensor-vect} since both functors $[k(G)]$ and $[\Gamma]$ are lax monoidal.
		
		If $\Gamma=F_n$ is a free group, the representation scheme is isomorphic to $\rep(F_n,G)\simeq G^n$,
		and so $k(\rep(F_n,G))\simeq k(G)^{\otimes n}$.
		Since $[F_n]=h^{[n]}$, Proposition \ref{prop:tensor-for-represen-functors} implies
		the isomorphism $k(G)\otimes_\Ho h^{[n]}\simeq k(G)^{\otimes n}$, which gives the result.
		
		If $\Gamma$ is not free, picking a presentation $\Gamma=\langle g_1,\dots,g_n\mid r_0=1,r_1,r_2,\dots \rangle$
		and using Lemma \ref{lem:group-funct-as-quot} identifies the functor $[\Gamma]$ with $\hom_\Ho([n],-)/\{\varphi_i\}$
		which by definition is a coequalizer $\colim [F_{n+1}\rightrightarrows F_n]$.
		
		The algebra $k(\rep(\Gamma,G))$ is defined by the adjunction
		\[
		\hom_{\cC om \cA lg}(k(\rep(\Gamma, G)),B)\simeq \hom_{\cG r}(\Gamma, G(B))
		\]
		Therefore, we need to prove that the commutative algebra $[k(G)]\otimes_\Ho [\Gamma]$ satisfies the adjunction above. We have
		\begin{align*}
			\hom_{\cC om \cA lg}\big([k(G)]\otimes_\Ho [\Gamma],B\big) &\simeq \hom_{\cC om \cA lg}\big([k(G)]\otimes_\Ho \colim \left([F_{n+1}\rightrightarrows F_n]\right),B\big)\\
			& \simeq \hom_{\cC om \cA lg}\big(\colim \left([k(G)]\otimes_\Ho [F_{n+1}\rightrightarrows F_n]\right),B\big)\\
			& \simeq \lim \left(\hom_{\cC om \cA lg}\big(k(G^{n}),B\big) \rightrightarrows  \hom_{\cC om \cA lg}\big(k(G^{n+1}),B\big)\right)\\
			& \simeq \lim \left(\hom_{\cG r}\big(F_{n},G(B)\big) \rightrightarrows  \hom_{\cG r}\big(F_{n+1},G(B)\big)\right)\\
			& \simeq \hom_{\cG r}\big(\colim (F_{n+1}\rightrightarrows F_n),G(B)\big)\\
			& \simeq \hom_{\cG r}\big(\Gamma,G(B)\big).
		\end{align*}
        In this computation most of isomorphisms follow from the commutativity between limits/colimits and hom functors. The only isomorphism which requires any explanation is between
        $$
        \hom_{\cC om \cA lg}\big(k(G^{n}),B\big) \rightrightarrows  \hom_{\cC om \cA lg}\big(k(G^{n+1}),B\big)
        \quad \mbox{and} \quad
        \hom_{\cG r}\big(F_{n},G(B)\big) \rightrightarrows  \hom_{\cG r}\big(F_{n+1},G(B)\big),
        $$
        which follows from the natural isomorphism between the sets $\hom_{\cC om \cA lg}\big(k(G^{n}),B\big)$
        and $\hom_{\cG r}\big(F_{n},G(B)\big)$
		with the $n$-tuples of elements in the group $G(B)$.

		The algebra structure on $k(G)\otimes_\Ho [\Gamma]$ is induced by the algebra structure on
		$k(G)\otimes_\Ho [F_n]$ (using the notation above).	
		The algebra $k(\rep(\Gamma,G))$ is a quotient of $k(\rep(F_n,G))\simeq k(G^{\times n})$. Thus,
		it is enough to check that the algebra structures coincide in the case of a free group $\Gamma=F_n$.
		
		But in this case, the functor $[F_n]$ is corepresentable, $[F_n]=h^{[n]}$ by Corollary \ref{cor:functor-F-n-is-corepres}.
		The object $[n]\in\ob(\Ho)$ is a counital coalgebra object in $\Ho$ with the structure maps $[n]\to [2n]$
		and $[0]\to [n]$ given by
		\begin{align*}
			\Delta^n & =\Delta\otimes\dots\otimes \Delta\colon  [n]\simeq [1]^{\otimes n}\to [2]^{\otimes n}\simeq [2n]\\
			\varepsilon^n & =\varepsilon\otimes\dots\otimes \varepsilon \colon  [0]\simeq [0]^{\otimes n}\to [1]^{\otimes n}\simeq [n]
		\end{align*}
		Therefore, we can apply Lemma \ref{lem:isom-of-algebras-from-tensor}, which gives the first isomorphism of the theorem.

		\vspace{3mm}
		
		The vector space $\hom^\Ho([\Gamma_1],[\Gamma_2])$ has an algebra structure induced by the convolution product.
		Namely, if $\alpha,\beta\colon [\Gamma_1]\to [\Gamma_2]$ are natural transformations,
		we define their product $\alpha\ast \beta$ by
		\[
		\xymatrix{
			[\Gamma_1][n] \ar@{-->}[d]_-{\alpha\ast \beta}  \ar[r]^-{[\Gamma_1][\Delta^n]} & [\Gamma_1][2n] \ar[r] & [\Gamma_1][n]\otimes [\Gamma_1][n] \ar[d]^-{\alpha[n]\otimes \beta[n]} \\
			[\Gamma_2][n] & [\Gamma_2][2n] \ar[l]_-{[\Gamma_2][m^n]} & [\Gamma_2][n]\otimes [\Gamma_2][n] \ar[l]
			}
		\]
		In the diagram, the functor $[\Gamma_1]$ is strict monoidal, and so the natural map $[\Gamma_1](n)\otimes [\Gamma_1](n)\to [\Gamma_1](2n)$ is an isomorphism, whose inverse we are using.		
		Similarly as before, it is enough to prove the isomorphism for $\Gamma_1$ free.
		In this case, the first part of Proposition \ref{prop:tensor-for-represen-functors} implies the result.

        The above argument assumes that the group $\Gamma$ is finitely generated -- if this is not the case,
        the result follows from the result for finitely generated groups and the standard fact that any group
        is a colimit of finitely generated subgroups.
	\end{proof}
	
	\begin{remark}
		In general, for such a convolution in $\hom^\Ho(F,G)$ to exist, one needs $F$ to be an \emph{oplax monoidal functor}
		and $G$ to be lax monoidal.
	\end{remark}
	
	\begin{proposition}
		\label{prop:tensor-isom-to-rep-var-of-Lie-alg}
		There is an isomorphism of commutative algebras $[k(G)]\otimes_\Ho [\cU\mathfrak{a}]\simeq k(\rep(\mathfrak{a},\mathfrak{g}))$,
		where $\mathfrak{a}$ is a Lie algebra,
		$\mathfrak{g}=\mathcal{L}ie(G)$ and $\rep(\mathfrak{a},\mathfrak{g})$ is the Lie representation variety.
	\end{proposition}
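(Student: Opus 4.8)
The plan is to follow the proof of Theorem~\ref{thm:rep-var-groups-as-tensor-prod}(1) almost verbatim, with the free group $F_n$ replaced by the free Lie algebra $\mathfrak{f}_n$, the identity $\rep(F_n,G)\simeq G^n$ replaced by $\rep(\mathfrak{f}_n,\mathfrak{g})\simeq\mathfrak{g}^n$, and the corepresentability statement $[F_n]=h^{[n]}$ of Corollary~\ref{cor:functor-F-n-is-corepres} replaced by the description of $[TV]$ in Lemma~\ref{lem:TV-functor}. Both $[k(G)]$ and $[\cU\mathfrak{a}]$ are strict (hence lax) monoidal, so $[k(G)]\otimes_\Ho[\cU\mathfrak{a}]$ is a commutative algebra by Proposition~\ref{prop:alg-structure-on-tensor-vect}. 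Since $k(\rep(\mathfrak{a},\mathfrak{g}))$ is characterized by the adjunction $\hom_{\cC om \cA lg}(k(\rep(\mathfrak{a},\mathfrak{g})),B)\simeq\hom_{\cL ie}(\mathfrak{a},\mathfrak{g}\otimes B)$, natural in the commutative algebra $B$, it suffices by Yoneda to produce such a natural isomorphism with $[k(G)]\otimes_\Ho[\cU\mathfrak{a}]$ in place of $k(\rep(\mathfrak{a},\mathfrak{g}))$; and as in the group case it is enough to do this for finitely generated $\mathfrak{a}$, the general case following from the fact that any Lie algebra is the filtered colimit of its finitely generated subalgebras (both sides sending this colimit to the appropriate limit, resp.\ colimit).

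The base case is $\mathfrak{a}=\mathfrak{f}_n$, the free Lie algebra on $V=k^n$. Here $\cU\mathfrak{f}_n=TV$ with $V$ primitive, so $[\cU\mathfrak{f}_n]=[TV]$, and I would feed this into Lemma~\ref{lem:TV-functor}. As $[k(G)]\otimes_\Ho(-)$ preserves colimits, it carries that presentation to
\[
[k(G)]\otimes_\Ho[TV]\;\simeq\;\bigoplus_{m\geq 0}V^{\otimes m}\otimes_{S_m}\Big([k(G)]\otimes_\Ho\big(h^{[m]}/\{\varphi_i,\psi_j,\nu_j\}\big)\Big).
\]
By Proposition~\ref{prop:tensor-for-represen-functors}, $[k(G)]\otimes_\Ho h^{[m]}\simeq k(G)^{\otimes m}=k(G^m)$, and the heart of the argument is the identification $[k(G)]\otimes_\Ho\big(h^{[m]}/\{\varphi_i,\psi_j,\nu_j\}\big)\simeq(\mathfrak{g}^*)^{\otimes m}$: transported along $[k(G)]$, the relation $\nu_j$ cuts the $j$-th slot $k(G)$ down to $k(G)/k\cong I_e$ (functions vanishing at the identity), the relation $\varphi_j$ (i.e.\ $\Delta=\eta\otimes\id+\id\otimes\eta$ in slot $j$) then forces every product of such functions to vanish, leaving $I_e/I_e^2=\mathfrak{g}^*$ in slot $j$, and the relation $\psi_j$ ($S=-\id$ in slot $j$) becomes automatic because the differential of inversion at $e$ is $-\id$. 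Granting this, the right-hand side collapses to $\bigoplus_m\Sym^m(V\otimes\mathfrak{g}^*)=\Sym(V\otimes\mathfrak{g}^*)$. On the other hand a Lie homomorphism $\mathfrak{f}_n\to\mathfrak{g}\otimes B$ is just a linear map $V\to\mathfrak{g}\otimes B$, so $\rep(\mathfrak{f}_n,\mathfrak{g})$ is the affine space on $\hom(V,\mathfrak{g})$ and $k(\rep(\mathfrak{f}_n,\mathfrak{g}))=\Sym(V\otimes\mathfrak{g}^*)$, matching the computation. It remains to see that the product supplied by Proposition~\ref{prop:alg-structure-on-tensor-vect} --- which simply concatenates the tensor factors of the two strict monoidal functors --- goes over to the standard (concatenation) product on $\Sym(V\otimes\mathfrak{g}^*)$ under this identification; this is a direct inspection of the isomorphism in Lemma~\ref{lem:TV-functor}.

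For general finitely generated $\mathfrak{a}=\langle x_1,\dots,x_n\mid \rho_1,\rho_2,\dots\rangle$, I would present it as a reflexive coequalizer $\mathfrak{a}=\colim[\mathfrak{f}_{n+1}\rightrightarrows\mathfrak{f}_n]$ of free Lie algebras (the two maps fixing $x_1,\dots,x_n$ and sending the remaining generator(s) to $0$, respectively to the relators, with common section the obvious inclusion). The enveloping algebra functor $\cU$ is a left adjoint, so it preserves this colimit; and for a reflexive pair the coequalizer of strict monoidal functors is computed pointwise, hence agrees with the coequalizer in $\cF un(\Ho,\cV ect)$, so $[\cU\mathfrak{a}]\simeq\colim\big[[\cU\mathfrak{f}_{n+1}]\rightrightarrows[\cU\mathfrak{f}_n]\big]$. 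Now I would run exactly the chain of natural isomorphisms from the proof of Theorem~\ref{thm:rep-var-groups-as-tensor-prod}: pull the colimit out of $[k(G)]\otimes_\Ho(-)$, convert it to a limit via $\hom_{\cC om \cA lg}(-,B)$, apply the base case together with the defining adjunctions of $\rep(\mathfrak{f}_n,\mathfrak{g})$ and $\rep(\mathfrak{f}_{n+1},\mathfrak{g})$, and finally identify $\lim[\hom_{\cL ie}(\mathfrak{f}_n,\mathfrak{g}\otimes B)\rightrightarrows\hom_{\cL ie}(\mathfrak{f}_{n+1},\mathfrak{g}\otimes B)]$ with $\hom_{\cL ie}(\mathfrak{a},\mathfrak{g}\otimes B)$. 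By Yoneda this gives the isomorphism, and its compatibility with the algebra structures is inherited from the free case exactly as in the group case.

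I expect the main obstacle to be the explicit identification $[k(G)]\otimes_\Ho\big(h^{[m]}/\{\varphi_i,\psi_j,\nu_j\}\big)\simeq(\mathfrak{g}^*)^{\otimes m}$ --- the statement that killing these relations geometrically linearizes $G$ at the identity --- together with the routine but somewhat fiddly verification that the resulting isomorphism respects the algebra structures; the reflexive-coequalizer bookkeeping and the colimit/limit manipulations are entirely parallel to the group case and should present no difficulty.
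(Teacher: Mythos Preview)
Your proposal is correct and follows essentially the same approach as the paper's (sketched) proof: reduce to the free case via a coequalizer presentation, apply Lemma~\ref{lem:TV-functor} to $[TV]$, and identify the resulting quotient of $k(G)^{\otimes m}$ with $(\mathfrak{g}^*)^{\otimes m}$ by showing the relations $\varphi_i,\psi_j,\nu_j$ kill the constants and $I_e^2$, yielding $\Sym(V\otimes\mathfrak{g}^*)\simeq k(\rep(\cL ie(V),\mathfrak{g}))$. The paper makes the $I_e^2\to 0$ step explicit with the element-level computation $(f_1-f_1(e))(f_2-f_2(e))\otimes_\Ho v\sim 0$ via the primitivity relation, while you in turn spell out the passage from free to general $\mathfrak{a}$ (which the paper leaves implicit) using the reflexive-coequalizer and Yoneda argument parallel to Theorem~\ref{thm:rep-var-groups-as-tensor-prod}; these are complementary levels of detail within the same proof.
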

	\begin{proof}
		We will only sketch the proof.
		Since any Lie algebra $\mathfrak{a}$ is a quotient of a free Lie algebra $\cL ie(V)$,
		it is enough to prove the isomorphism for free Lie algebras.
		
		If $\mathfrak{a}=\cL ie(V)$, then $\cU \mathfrak{a}\simeq TV$.
		Thus, by Lemma \ref{lem:TV-functor} we have
		\[
		[k(G)]\otimes_\Ho [TV]\simeq \bigoplus\limits_{n\geqslant 0} k(G)\otimes_\Ho \left( V^{\otimes n}\otimes_{S_n} h^{[n]}\Big/ \sim \right)
		\]
		By Lemma \ref{lem:tensor-as-coeq} and Proposition \ref{prop:tensor-for-represen-functors},
		the latter is isomorphic to $\bigoplus_{n\geqslant 0}\left( k(G)^{\otimes n}\otimes_{S_n} V^{\otimes n}\right)/\sim$.
		The relations imply that $1\otimes v=0$ for any $v\in V^{\otimes n}$, $1\in k(G)^{\otimes n}$.
		Moreover, if we denote $I:=\ker(k(G)\to k)$ the kernel of the counit map,
		for any $f\in I^2\subset k(G)$, $f\otimes v=0$ in the quotient.
		Indeed, it is enough to see it for the element $X$ of the form $X=(f_1-f_1(e))(f_2-f_2(e))\otimes_{\Ho} v$.
		We have
		\begin{align*}
		X & \sim \big(\left(f_1-f_1(e)\right)\otimes \left(f_2 - f_2(e)\right)\big) \otimes_{\Ho} \Delta(v) \sim
		\big(f_1\otimes f_2 \big)\otimes_{\Ho} \big( (\id-\eta\varepsilon)\otimes(\id-\eta\varepsilon)\circ \Delta (v) \big) \\
        &\sim \big(f_1\otimes f_2 \big) \otimes_{\Ho} \big((\id-\eta\varepsilon)\otimes(\id-\eta\varepsilon)(1 \otimes v + v \otimes 1)\big) = 0
		\end{align*}
		
		Thus, the quotient is isomorphic to $\bigoplus\limits_{n\geqslant 0}(I/I^2)^{\otimes n}\otimes_{S_n} V^{\otimes n}$,
		which is isomorphic to $Sym(\mathfrak{g}^{\ast}\otimes V)\simeq k(\rep(\cL ie(V),\mathfrak{g}))$ since
		$\mathfrak{g}^\ast\simeq I/I^2$. This finishes the proof.
	\end{proof}

	We say that a Hopf algebra $H$ \emph{acts} on a Hopf algebra $A$ if $A$ has an algebra action $\mu\colon H\otimes A\to A$ of $H$,
	denoted $h\cdot a$, such that $h\cdot(ab)=(h'\cdot a)(h''\cdot b)$ and $h\cdot 1=\varepsilon(h)1$.
	The \emph{invariants} of an action of $H$ on $H'$ is by definition $\ker(\mu-\mu\circ \eta)$.
    An action of a cocommutative Hopf algebra $H$ on a Hopf algebra $H'$
    extends using the comultiplication of $H$ to an action of $H$ on the functor $[H']$.

    Starting with the action of $H$ on itself by conjugation $h \cdot h_1 = h' h_1 S(h'')$,
    we get the conjugation action of $H$ on $H^{\otimes n}$ defined in \cite{CK}.
    We can dualize this to get a coaction of a \emph{commutative} Hopf algebra $B$
    on the functor $[B]\colon \Ho^{op}\to \cV ect$. This, in turn, induces a coaction of $B$
    on the tensor product $[B]\otimes_\Ho [H]$ for any cocommutative Hopf algebra $H$.
	
	In particular, if $B=k(G)$ is the coordinate ring of an affine algebraic group and $H=k[\Gamma]$,
	this coaction is dual to the usual action of $G$ on the representation variety $\rep(\Gamma,G)$.
	If $H=\cU \mathfrak{a}$, the coaction is dual to the action of $G$ on $\rep(\mathfrak{a},\mathfrak{g})$.

		\begin{corollary}
			There is an isomorphism of commutative algebras $[k(G)]^G\otimes_\Ho [\Gamma]\simeq k(\cX(\Gamma,G))$,
			where $\cX(\Gamma,G)$ is the $G$-character variety of a discrete group $\Gamma$.
		\end{corollary}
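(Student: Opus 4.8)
The plan is to reduce to the corresponding statement for representation varieties, Theorem~\ref{thm:rep-var-groups-as-tensor-prod}(1), by showing that taking $G$-invariants commutes with the tensor product over $\Ho$. By definition the character variety is the GIT quotient $\rep(\Gamma,G)/\!\!/G$, so $k(\cX(\Gamma,G))=k(\rep(\Gamma,G))^{G}$, the subalgebra of invariants for the $G$-action, equivalently the subspace of coinvariants for the dual $k(G)$-coaction. By Theorem~\ref{thm:rep-var-groups-as-tensor-prod}(1) there is an isomorphism of commutative algebras $k(\rep(\Gamma,G))\simeq [k(G)]\otimes_\Ho[\Gamma]$, and — as recorded in the paragraph preceding this corollary — it intertwines the $G$-action on the left with the coaction of $k(G)$ on $[k(G)]\otimes_\Ho[\Gamma]$ induced from the conjugation coaction on the functor $[k(G)]$. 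Hence it suffices to produce an algebra isomorphism
\[
\big([k(G)]\otimes_\Ho[\Gamma]\big)^{k(G)}\;\simeq\;[k(G)]^{G}\otimes_\Ho[\Gamma].
\]

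First I would note that the functor $[\Gamma]\colon\Ho\to\cV ect$ carries the trivial $k(G)$-coaction, so the whole coaction on the coend \eqref{eq:tensor} comes from $[k(G)]$; moreover, since the coaction is defined on the functor $[k(G)]$ (as in the discussion above), every structure map $f^{\ast}$ of $[k(G)]$ is a $k(G)$-comodule map, so $[k(G)]^{G}$ really is the sub-functor $[n]\mapsto k(G^{n})^{G}$ with the restricted structure maps, and it is lax monoidal with the lax structure inherited from $[k(G)]$. Now present \eqref{eq:tensor} as a coequalizer of a pair of $k(G)$-comodule maps
\[
\bigoplus_{[m],[n]}\ \bigoplus_{f\in\hom_\Ho([m],[n])} k(G^{n})\otimes[\Gamma]([m])\ \rightrightarrows\ \bigoplus_{n\geqslant 0} k(G^{n})\otimes[\Gamma]([n]),
\]
the two maps sending $(f,v\otimes w)$ to $f^{\ast}(v)\otimes w$ (in the $[m]$-summand) and to $v\otimes f_{\ast}(w)$ (in the $[n]$-summand). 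Applying the invariants functor termwise and using that invariants commute with direct sums and that $(V\otimes W)^{k(G)}=V^{k(G)}\otimes W$ for a trivial comodule $W$ turns this into exactly the coequalizer computing $[k(G)]^{G}\otimes_\Ho[\Gamma]$.

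The one step that needs an honest argument — and the main obstacle — is that the invariants functor commutes with this coequalizer, i.e.\ with the colimit over $\Ho$; this is not formal, since invariants, being a right adjoint, preserves limits rather than colimits. It does hold provided $G$ is linearly reductive (in characteristic zero: $G$ reductive), because then the invariants functor on rational $k(G)$-comodules is a functorial direct summand (the trivial isotypic component), hence exact, and an exact additive functor preserving coproducts preserves all colimits. Since this is exactly the hypothesis under which the GIT quotient $\cX(\Gamma,G)$ is defined, it costs nothing; granting it, the displayed isomorphism follows, and it is an isomorphism of commutative algebras because both sides carry the structure of Proposition~\ref{prop:alg-structure-on-tensor-vect} coming from the (compatible) lax monoidal structures. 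Alternatively, one may avoid the explicit coequalizer and argue as in the proof of Theorem~\ref{thm:rep-var-groups-as-tensor-prod}: for $\Gamma=F_{n}$ free, Proposition~\ref{prop:tensor-for-represen-functors} gives $[k(G)]^{G}\otimes_\Ho[F_{n}]\simeq[k(G)]^{G}([n])=k(G^{n})^{G}=k(\cX(F_{n},G))$, and then Lemma~\ref{lem:group-funct-as-quot} writes $[\Gamma]$ as a colimit of corepresentable functors past which one commutes $[k(G)]^{G}\otimes_\Ho(-)$ on one side and $(-)^{G}$ on the other — the latter commutation requiring the same reductivity input.
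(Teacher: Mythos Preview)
Your approach is the same as the paper's: identify the $G$-action on $[k(G)]\otimes_\Ho[\Gamma]$ with the conjugation action on $k(\rep(\Gamma,G))$, then pass to invariants and commute $(-)^G$ past the tensor product over $\Ho$. The paper dispatches this last step in one line (``it is trivial to see that taking invariants commutes with tensor products over $\Ho$''), whereas you correctly observe that invariants is a right adjoint and does not preserve colimits in general, and you supply the missing hypothesis of linear reductivity that makes $(-)^G$ exact and coproduct-preserving. So your proof is essentially the paper's, only more careful; the reductivity input you isolate is genuinely needed and is implicitly present in the paper since the GIT quotient $\cX(\Gamma,G)$ is only defined in that setting.
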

		\begin{proof}
			There is an action of $G$ on the functor $k(G)$ induced by the conjugation.
			This action induces an action on the tensor product $[k(G)]\otimes_\Ho [\Gamma]$,
			which is the same as the action of $G$ by conjugation on the representation variety.
			By definition, the ring of functions on the character variety is the invariants of this action.
			It is trivial to see that taking invariants commutes with tensor products over $\Ho$.
		\end{proof}
    Applying the same argument to Proposition~\ref{prop:tensor-isom-to-rep-var-of-Lie-alg} gives us:
	\begin{corollary}
		There is a natural isomorphism $[k(G)]^G\otimes_\Ho [\cU\mathfrak{a}]\simeq \cX(\mathfrak{a},\mathfrak{g})$.
	\end{corollary}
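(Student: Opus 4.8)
The plan is to run the proof of the previous Corollary essentially verbatim, replacing the discrete group $\Gamma$ (and its group algebra $k[\Gamma]$) by the Lie algebra $\mathfrak{a}$ (and its enveloping algebra $\cU\mathfrak{a}$), and using Proposition~\ref{prop:tensor-isom-to-rep-var-of-Lie-alg} in the role played there by the first part of Theorem~\ref{thm:rep-var-groups-as-tensor-prod}. The three ingredients are: (i) the isomorphism of commutative algebras $[k(G)]\otimes_\Ho[\cU\mathfrak{a}]\simeq k(\rep(\mathfrak{a},\mathfrak{g}))$ of Proposition~\ref{prop:tensor-isom-to-rep-var-of-Lie-alg}; (ii) the $k(G)$-coaction on the contravariant functor $[k(G)]\colon\Ho^{op}\to\cV ect$ obtained by dualizing the conjugation action of the Hopf algebra $k(G)$ on itself, as explained just before the first Corollary; and (iii) the fact, already noted in the text, that the coaction which (ii) induces on $[k(G)]\otimes_\Ho[\cU\mathfrak{a}]$ is, under the isomorphism (i), dual to the usual action of $G$ on $\rep(\mathfrak{a},\mathfrak{g})$.

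Granting these, the argument is short. The coaction on $[k(G)]\otimes_\Ho[\cU\mathfrak{a}]$ is induced levelwise from the coaction $\rho_n\colon k(G^n)\to k(G^n)\otimes k(G)$ on the factor $[k(G)]([n])$, and since $\rho=\{\rho_n\}$ is a natural transformation of functors $\Ho^{op}\to\cV ect$ it is compatible with the relations defining the coend \eqref{eq:tensor}; hence, exactly as in the proof of the previous Corollary, taking $G$-invariants commutes with $-\otimes_\Ho[\cU\mathfrak{a}]$, the invariant subfunctor of $[k(G)]$ being $[k(G)]^G\colon[n]\mapsto k(G^n)^G$. Therefore
\[
[k(G)]^G\otimes_\Ho[\cU\mathfrak{a}]\;\simeq\;\bigl([k(G)]\otimes_\Ho[\cU\mathfrak{a}]\bigr)^G\;\simeq\;k(\rep(\mathfrak{a},\mathfrak{g}))^G\;=\;k(\cX(\mathfrak{a},\mathfrak{g})),
\]
the last equality being the definition of the character variety as the categorical quotient of $\rep(\mathfrak{a},\mathfrak{g})$ by $G$.

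The step I expect to require the most care is (iii): one must check that the formally defined coaction really is the geometric one, i.e. track $G$-equivariance through the proof of Proposition~\ref{prop:tensor-isom-to-rep-var-of-Lie-alg}. That proof proceeds via a presentation $\mathfrak{a}=\cL ie(V)/\mathfrak{r}$, the identification $\cU\cL ie(V)\simeq TV$, Lemma~\ref{lem:TV-functor}, and the relations that kill the square $I^2$ of the augmentation ideal $I=\ker(k(G)\to k)$. Because the $k(G)$-coaction on $[k(G)]$ is by construction natural in $\Ho^{op}$, the functorial steps are automatically equivariant, so the whole matter collapses to the linear-algebra statement that the conjugation coaction of $k(G)$ on $k(G)$ descends to $I/I^2\simeq\mathfrak{g}^\ast$ as the dual of the adjoint action of $G$ on $\mathfrak{g}$. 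This is precisely what makes $\bigl(\Sym(\mathfrak{g}^\ast\otimes V)\bigr)^G\simeq k(\cX(\cL ie(V),\mathfrak{g}))$, and it propagates from the free case to an arbitrary $\mathfrak{a}$ because both sides of the claimed isomorphism are then quotients by $G$-stable ideals. A routine bookkeeping argument completes the proof.
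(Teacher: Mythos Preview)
Your proposal is correct and follows essentially the same approach as the paper: the paper's entire proof is the one-line remark ``Applying the same argument to Proposition~\ref{prop:tensor-isom-to-rep-var-of-Lie-alg} gives us,'' which is precisely your strategy of rerunning the previous Corollary with $\cU\mathfrak{a}$ in place of $k[\Gamma]$. Your additional paragraph checking that the coaction is the geometric one (ingredient (iii)) is a legitimate expansion of a point the paper leaves implicit.
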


    \begin{remark}
        Similarly we can define the product $k[(G)]\otimes_\Ho [\Gamma]^\Gamma$,
        however but it is not isomorphic to the character variety in general.
        For example, if $\Gamma$ is an \emph{abelian} group, $[\Gamma]^\Gamma = [\Gamma]$ and so
        $[k(G)]\otimes_\Ho [\Gamma]^\Gamma\simeq \rep(\Gamma, G) $ which is not isomorphic to $\cX(\Gamma,G)$.
        However, both $[k(G)]^G\otimes_\Ho [\Gamma]$ and $[k(G)]\otimes_\Ho [\Gamma]^\Gamma$  contain subscheme $\cX^{den}(\Gamma, G)$ of $\cX(\Gamma, G)$ corresponding
        to the representations with Zariski dense image.
    \end{remark}
	
	\medskip

    \begin{remark}
        Dually, if $\Gamma_1$ and $\Gamma_2$ are discrete groups, there is an isomorphism of algebras
        \[
        \hom^\Ho([\Gamma_1],[\Gamma_2]^{\Gamma_2})\simeq k[ \cX(\Gamma_1,\Gamma_2) ],
        \]
        where as in Theorem \ref{thm:rep-var-groups-as-tensor-prod},
        $k[\cX(\Gamma_1,\Gamma_2)]$ denotes the algebra of functions with finite support on the set $\cX(\Gamma_1,\Gamma_2)$.
    \end{remark}
    \begin{remark}
    	We can add that $\hom^{\Ho} \left([\cU\mathfrak{g}] , [\Gamma]\right)$ and $\hom^{\Ho} \left([\cU\mathfrak{g}]^{G}, [\Gamma]\right)$
    	are isomorphic as vector spaces to the algebras of differential operators on the representation/character variety supported at (near) the trivial representation.
    \end{remark}
	\begin{remark}
        Theorem~\ref{thm:rep-var-groups-as-tensor-prod} essentially appears in~\cite{MT}. However, Massuyeau--Turaev do not work with tensor products over a category and instead express it in terms of quotient of the tensor algebra over the product.
        In fact, for any cocommutative Hopf algebra $A$ and a commutative Hopf algebra $B$,
        there is a natural isomorphism of commutative algebras $[B]\otimes_\Ho [A]\simeq A_B$, where $A_B$ is the $B$-representation
        algebra of $A$ of Massuyeau--Turaev, see \cite[Section 3]{MT}.
        Thus, for an indirect proof of the above Theorem one can combine
		this isomorphism with \cite[Example 3.3]{MT}.
	\end{remark}
    \begin{remark}
        The coaction of $B$ on $[B]\otimes_\Ho[H]$ coincides with the coaction of
        $B$ on the Massuyeau--Turaev $B$-representation algebra of $H$, see \cite[Lemma 3.3]{MT}.
    \end{remark}

	\section{Remarks on representation homology and noncommutative sets}
	\label{sec:remark}
	
    Representation homology was first defined in \cite{BKR}. For a given vector space $V$,
    associating to an associative algebra $A$ the commutative algebra of functions on the corresponding
    representation variety $\rep(A,V)$ defines a functor $\cA lg_k\to \cC om\cA lg_k$.
    It can be extended to a functor $(-)_V\colon \mathcal{DGA}_k\to \mathcal{DGCA}_k$ from the category of
    differential graded (dg) algebras to the category of dg commutative algebras.

    By \cite[Theorem 2.2]{BKR}, there exists a \emph{derived functor} $\dL(-)_V\colon \cH o(\mathcal{DGA})_k\to \cH o(\mathcal{GDCA}_k)$
    between the corresponding homotopy categories (in the sense of Quillen's model categories),
    and therefore one can define \emph{representation homology}
    of an algebra $A$ with coefficients in $V$ as the homology $\h_\bullet(A,V)=\h_\bullet(\dL(A)_V)$.
    These homology groups are computed as the homology of the (commutative) dg algebra $(R)_V$,
    where $R\sonto A$ is a semi-free dg algebra quasi-isomorphic to $A$.

    It is possible to generalize this construction of representation homology to dg algebras
    over any binary quadratic operad, see \cite[Appendix A]{BFPRW}. In particular,
    one can define \emph{Lie representation homology} groups $\h_\bullet(\mathfrak{a}, \mathfrak{g})$ for any
    (dg) Lie algebras $\mathfrak{a}$ and $\mathfrak{g}$.

    The representation homology for groups was obtained in the recent paper \cite{BRY}
    in terms of a non-abelian derived functor from the category of simplicial groups $s\cG r$ to
    the category $s\cC om\cA lg_k$ of simplicial commutative algebras, see \cite[Section 4.2]{BRY}.

    The isomorphisms $k(G)\otimes_\Ho [\Gamma]\simeq k(\rep(\Gamma,G))$ of Theorem \ref{thm:rep-var-groups-as-tensor-prod}
    and Proposition \ref{prop:tensor-isom-to-rep-var-of-Lie-alg} allow to define representation
    homology of Lie algebras and groups in a unified way, using the usual abelian (as opposed to model-theoretic)
    derived functors. For the detailed discussion on the subject see \cite[Section 4.2]{BRY}
    and in particular Corollary 4.2.
	
	\medskip
		
	The Sweedler product of Anel-Joyal (see \cite[Section 3.4]{AJ})
	can also be interpreted as a tensor product over a the category of
	so-called \emph{non-commutative sets} $\cF(as)$. This is the category with $\ob(\cF(as))=\dN=\{[0],[1],\dots \}$.
	A morphism from $[m]$ to $[n]$ is a map of sets $f\colon [m]=\{1,\dots,m \}\to \{ 1,\dots,n\}=[n]$
	with a total ordering on $f^{-1}(i)$ for all $i\in [n]$.
	This category is the PROP of unital associative algebras, i.e. the category of strict monoidal functors $\cF(as)\to \cV ect$
	is equivalent to the category of unital associative algebras, see \cite[Section 3]{Pir02}.
	If $A$ is an associative unital algebra, and $C$ is a coassociative counital coalgebra, then
	$[C]\otimes_{\cF(as)}[A]\simeq C\vartriangleright A:=T(C\otimes A)/\langle c\otimes ab-(c^{(1)}\otimes a)\otimes (c^{(2)}\otimes b),c\otimes 1-\varepsilon(c)\cdot1 \rangle$ is the Sweedler product of \cite[Section 3.4]{AJ}.
	
	In particular, if $C=\End(V)^\ast$ is linear dual coalgebra of the algebra of endomorphisms of a finite dimensional vector space $V$,
	then the abelianization $([C]\otimes_{\cF(as)}[A])_{ab}$ is isomorphic to the algebra $k(\rep(A,V))$ of regular functions
	in the representation scheme of the algebra $A$.

	\section{Deformations of representation varieties for surface groups}
	\label{sec:deform}
	
	In this section we will sketch how the tensor product construction allows to deform the
	representation and character varieties of surface groups. For all the details of the constructions and proofs
	see \cite{KP}.
	
	From now on we assume $\Gamma=\pi_1(\Sigma_g)$ is the fundamental group of a closed oriented surface of genus~$g$.
	The group $\Gamma$ has a presentation $\Gamma=\langle a_1,\dots,a_g,b_1,\dots,b_g \mid [a_1,b_1]\dots[a_g,b_g]=1 \rangle$.
	
	The idea is to use the isomorphisms $k(G)\otimes_\Ho [\Gamma]\simeq k(\rep(\Gamma,G))$
	and $k(G)^G\otimes_\Ho [\Gamma]\simeq k(\cX(\Gamma,G))$, and ``deform'' the three objects on the right hand side:
	the category $\Ho$, the functor $[\Gamma]$, and the functors $k(G)$ and $k(G)^G$.

    \medskip

    We define category $\Ro$, which is a variation of the category of ribbons, as follows.
    In the simplest case the objects of the category $\Ro$ are disks with even number of disjoint embedded intervals plus additional combinatorial data, including in particular a fixed-point free involution on the set of the embedded intervals.
    Informally, we think about objects in $\Ro$ as disks with several ribbons attracted to it at the embedded intervals.
    The involution describes which intervals are endpoints of the same ribbon.

    The morphisms in this category are ``ribbon diagrams'' in the cylinder $D \times [0,1]$,
    constructed using five types of operations.
    The first one is just extending the ribbons vertically in the cylinder,
    i.e., continuously varying the embedded intervals.
    The second one is creating
    a new ribbon i.e., adding a ``cup;'' the third is connecting the ends of two different ribbons together (adding a ``cap'');
    terminating a ribbon (``stump''); and the last one is called ``split,''
    where a ribbon gets divided into two parallel ones by cutting it lengthwise. Pictorially these ``special'' morphisms can be visualized as follows	
	
	\begin{center}
		\begin{tikzpicture}[node distance=0.15,scale=0.5]
		
	\begin{scope}
	\draw (0,0) rectangle (4,4);
\begin {scope}[shift={(1,4)}]
\coordinate (La) at (0,0) ;
\coordinate (La-f) at (-90:0.2);
\coordinate (La-b) at (90:0.2);
\coordinate (La-l) at (0:0.2);
\coordinate (La-r) at (-180:0.2);
\coordinate (La-t) at (0,0);
\end {scope}
\begin {scope}[shift={(3,4)}]
\coordinate (LA) at (0,0) ;
\coordinate (LA-f) at (-90:0.2);
\coordinate (LA-b) at (90:0.2);
\coordinate (LA-l) at (0:0.2);
\coordinate (LA-r) at (-180:0.2);
\coordinate (LA-t) at (0,0);
\end {scope}
\node [above=of La]{\tiny $1$};
\node [above=of LA]{\tiny $1\pprime $};
%\begin{knot}[d]
\draw[line width=0.5pt,double,double distance=3pt]
(La)[out=-90]
to[in=-90,looseness=3] (LA)
;
%\end{knot}
	\end{scope}
	
		\begin{scope}[shift={(6,0)}]
		\draw (0,0) rectangle (4,4);
\begin {scope}[shift={(0.75,4)}]
\coordinate (LaU) at (0,0) ;
\coordinate (LaU-f) at (-90:0.2);
\coordinate (LaU-b) at (90:0.2);
\coordinate (LaU-l) at (0:0.2);
\coordinate (LaU-r) at (-180:0.2);
\coordinate (LaU-t) at (0,0);
\end {scope}
\begin {scope}[shift={(0.75,0)}]
\coordinate (LaD) at (0,0) ;
\coordinate (LaD-f) at (90:0.2);
\coordinate (LaD-b) at (270:0.2);
\coordinate (LaD-l) at (180:0.2);
\coordinate (LaD-r) at (0:0.2);
\coordinate (LaD-t) at (0,0);
\end {scope}
\begin {scope}[shift={(1.25,0)}]
\coordinate (LAD) at (0,0) ;
\coordinate (LAD-f) at (90:0.2);
\coordinate (LAD-b) at (270:0.2);
\coordinate (LAD-l) at (180:0.2);
\coordinate (LAD-r) at (0:0.2);
\coordinate (LAD-t) at (0,0);
\end {scope}
\begin {scope}[shift={(2.75,0)}]
\coordinate (LbD) at (0,0) ;
\coordinate (LbD-f) at (90:0.2);
\coordinate (LbD-b) at (270:0.2);
\coordinate (LbD-l) at (180:0.2);
\coordinate (LbD-r) at (0:0.2);
\coordinate (LbD-t) at (0,0);
\end {scope}
\begin {scope}[shift={(3.25,0)}]
\coordinate (LBD) at (0,0) ;
\coordinate (LBD-f) at (90:0.2);
\coordinate (LBD-b) at (270:0.2);
\coordinate (LBD-l) at (180:0.2);
\coordinate (LBD-r) at (0:0.2);
\coordinate (LBD-t) at (0,0);
\end {scope}
\begin {scope}[shift={(3.25,4)}]
\coordinate (LAU) at (0,0) ;
\coordinate (LAU-f) at (-90:0.2);
\coordinate (LAU-b) at (90:0.2);
\coordinate (LAU-l) at (0:0.2);
\coordinate (LAU-r) at (-180:0.2);
\coordinate (LAU-t) at (0,0);
\end {scope}
\node [above=of LaU]{\tiny $1$};
\node [below=of LaD]{\tiny $1$};
\node [below=of LbD]{\tiny $2$};
\node [below=of LBD]{\tiny $2\pprime $};
\node [below=of LAD]{\tiny $1\pprime $};
\node [above=of LAU]{\tiny $1\pprime $};

%\begin{knot}[d]
\draw [line width=0.5pt,double,double distance=3pt]
(LaD)[out=90]
to[in=-90,looseness=3] (LaU)
(LBD)[out=90]
to[in=-90,looseness=3] (LAU)
(LAD)[out=90]
to[in=90,looseness=3] (LbD)
;
%\end{knot}
		\end{scope}

		\begin{scope}[shift={(12,0)}]
		\draw (0,0) rectangle (4,4);
\begin {scope}[shift={(1,0)}]
\coordinate (LaD) at (0,0) ;
\coordinate (LaD-f) at (90:0.05);
\coordinate (LaD-b) at (270:0.05);
\coordinate (LaD-l) at (180:0.05);
\coordinate (LaD-r) at (0:0.05);
\coordinate (LaD-t) at (0,0);
\end {scope}
\begin {scope}[shift={(3,0)}]
\coordinate (LAD) at (0,0) ;
\coordinate (LAD-f) at (90:0.05);
\coordinate (LAD-b) at (270:0.05);
\coordinate (LAD-l) at (180:0.05);
\coordinate (LAD-r) at (0:0.05);
\coordinate (LAD-t) at (0,0);
\end {scope}
\node [below=of LaD]{\tiny $1$};
\node [below=of LAD]{\tiny $1\pprime $};
\begin {scope}[shift={(1,2)}]
\coordinate (LaM) at (0,0) ;
\coordinate (LaM-f) at (-90:0.05);
\coordinate (LaM-b) at (90:0.05);
\coordinate (LaM-l) at (0:0.05);
\coordinate (LaM-r) at (-180:0.05);
\coordinate (LaM-t) at (0,0);
\end {scope}
\begin {scope}[shift={(3,2)}]
\coordinate (LAM) at (0,0) ;
\coordinate (LAM-f) at (-90:0.05);
\coordinate (LAM-b) at (90:0.05);
\coordinate (LAM-l) at (0:0.05);
\coordinate (LAM-r) at (-180:0.05);
\coordinate (LAM-t) at (0,0);
\end {scope}
\begin {scope}
\clip (LaM-b) circle (0.2);
\draw [u](LaM-b) -- (LaM-f);
\end {scope}
\begin {scope}
\clip (LAM-b) circle (0.2);
\draw [u](LAM-b) -- (LAM-f);
\end {scope}
%\begin{knot}[d]
\draw[line width=0.5pt,double,double distance=3pt]
(LaD)[out=90]
to[in=-90,] (LaM)
;
\draw[line width=0.5pt,double,double distance=3pt]
(LAD)[out=90]
to[in=-90,] (LAM)
;
%\end{knot}
		\end{scope}

		\begin{scope}[shift={(18,0)}]
		\draw (0,0) rectangle (4,4);
\begin {scope}[shift={(1,0)}]
\coordinate (LaD) at (0,0) ;
\coordinate (LaD-f) at (90:0.2);
\coordinate (LaD-b) at (270:0.2);
\coordinate (LaD-l) at (180:0.2);
\coordinate (LaD-r) at (0:0.2);
\coordinate (LaD-t) at (0,0);
\end {scope}
\begin {scope}[shift={(3,0)}]
\coordinate (LAD) at (0,0) ;
\coordinate (LAD-f) at (90:0.2);
\coordinate (LAD-b) at (270:0.2);
\coordinate (LAD-l) at (180:0.2);
\coordinate (LAD-r) at (0:0.2);
\coordinate (LAD-t) at (0,0);
\end {scope}
\node [below=of LaD]{\tiny $1$};
\node [below=of LAD]{\tiny $1\pprime $};
\begin {scope}[shift={(0.5,4)}]
\coordinate (LaU) at (0,0) ;
\coordinate (LaU-f) at (-90:0.2);
\coordinate (LaU-b) at (90:0.2);
\coordinate (LaU-l) at (0:0.2);
\coordinate (LaU-r) at (-180:0.2);
\coordinate (LaU-t) at (0,0);
\end {scope}
\begin {scope}[shift={(1.5,4)}]
\coordinate (LbU) at (0,0) ;
\coordinate (LbU-f) at (-90:0.2);
\coordinate (LbU-b) at (90:0.2);
\coordinate (LbU-l) at (0:0.2);
\coordinate (LbU-r) at (-180:0.2);
\coordinate (LbU-t) at (0,0);
\end {scope}
\begin {scope}[shift={(2.5,4)}]
\coordinate (LBU) at (0,0) ;
\coordinate (LBU-f) at (-90:0.2);
\coordinate (LBU-b) at (90:0.2);
\coordinate (LBU-l) at (0:0.2);
\coordinate (LBU-r) at (-180:0.2);
\coordinate (LBU-t) at (0,0);
\end {scope}
\begin {scope}[shift={(3.5,4)}]
\coordinate (LAU) at (0,0) ;
\coordinate (LAU-f) at (-90:0.2);
\coordinate (LAU-b) at (90:0.2);
\coordinate (LAU-l) at (0:0.2);
\coordinate (LAU-r) at (-180:0.2);
\coordinate (LAU-t) at (0,0);
\end {scope}
\node [above=of LaU]{\tiny $1$};
\node [above=of LbU]{\tiny $2$};
\node [above=of LBU]{\tiny $2\pprime $};
\node [above=of LAU]{\tiny $1\pprime $};
\begin {scope}[shift={(1,2)}]
\coordinate (Sy) at (0,0);
\coordinate (Sy-f) at (90:0.2);
\coordinate (Sy-l) at (120:0.2);
\coordinate (Sy-r) at (60:0.2);
\coordinate (Sy-b) at (270:0.2);
\coordinate (Sy-t) at (270:0.2);
\end {scope}
\begin {scope}[shift={(3,2)}]
\coordinate (SY) at (0,0);
\coordinate (SY-f) at (90:0.2);
\coordinate (SY-l) at (120:0.2);
\coordinate (SY-r) at (60:0.2);
\coordinate (SY-b) at (270:0.2);
\coordinate (SY-t) at (270:0.2);
\end {scope}
%\begin{knot}[d]
\draw[line width=0.5pt,double,double distance=3pt]
(LaD)[out=90]
to[in=90,] (Sy)
(Sy-b)[out=90] to[in=240] (Sy-r)[out=60]
to[in=-90,] (LbU)
(Sy-b)[out=90] to[in=300] (Sy-l)[out=120]
to[in=-90,] (LaU)
;
\draw[line width=0.5pt,double,double distance=3pt]
(LAD)[out=90]
to[in=90,] (SY)
(SY-b)[out=90] to[in=300] (SY-l)[out=120]
to[in=-90,] (LBU)
(SY-b)[out=90] to[in=240] (SY-r)[out=60]
to[in=-90,] (LAU)
;
%\end{knot}
		\end{scope}
		\end{tikzpicture}
	\end{center}

    These basic morphisms satisfy many relations: two morphism are the same if the corresponding diagrams are homotopic;
    adding a cup followed by a cap is equivalent to a trivial morphism;
    splits and stumps interact as expected; and so on.

    The category $\Ro$ has a full subcategory $\Ro_0$ with objects indexed by the integers,
    where the $n$-th object corresponds to a disk with $2n$ embedded intervals in a standard position.
    There is a projection of the set of morphism in $\Ro_0$ to the morphisms in $\Ho$ by identifying ``ribbon diagrams'' which can be obtained by crossings. This explains why the category $\Ro_0$
    can be viewed as deformation of $\Ho$.

    \smallskip

    For any $3$ manifold $M$ with a disk on the boundary we can define a functor $[M]$ from $\Ro$ to $\cS ets$,
    a disk with embedded intervals is sent to the set of equivalence classes of ribbons inside $M$
    which terminate at these intervals. This restricts to a functor $[M]$ (some time called quantum fundamental group~\cite{Hab})
    from $\Ro_0$ which can be viewed as a deformation of the functors $[\pi_1(M)]$,
    because modulo crossings, the set of $n$ ribbons in $M$ can be identified with the element in $\pi_1(M)^{\times n}$.
    It is possible to obtain presentation of the functor $[M]$ using the geometry of the manifold.

    \smallskip

    It is a classical result that ribbon Hopf algebras (like quantum groups) lead to functors from ribbon categories to vector spaces,
    and this is the case in our situation. We are mainly interested in the dual case of coribbon algebra.
    To any semi-simple affine algebraic group scheme $G$ one can associate a coribbon Hopf algebra $k_q(G)$
    which gives a pair of functors from $\Ro_0$, one sending $[n]$ to $k_q(G)^{\otimes n}$
    and the other to a the space of invariants under suitable action of $k_q(G)$ on it.

    \medskip

    This gives us a deformation of the tensor products $k(G)\otimes_\Ho [\pi_1(M)]$ and $k(G)^G \otimes_\Ho [\pi_1(M)]$,
    which are algebras of functions on the representation and character variety, respectively.
    The presentation of $[M]$ makes it possible to compute these deformations using an analog of Lemma~\ref{lem:tensor-as-coeq}.
    For example when $[M]$ is a thickened torus and $G=\dC^\times$, the resulting
    deformation is isomorphic to the Weyl algebra $A_q\simeq k\langle x,p \rangle/([x,p]=q)$.

    In general this deformation of the character variety is not an algebra,
    since it is not possible to put a lax moinoidal structure on the functor $[M]$.
    However, in the case when $M=\Sigma\times [0,1]$ is a thickened surface, the functor $[M]$
    has a natural monoidal structure induced by stacking the two copies of $\Sigma\times [0,1]$ together.%
    \footnote{This requires putting a monoidal structure on $\Ro$, which requires replacing the disk with an annulus,
    	and other technical details which we are suppressing.}

    \medskip

    When $q$ is root of unity there are several forms of the quantum group $U_q(g)$ and $k_q(G)$.
    By taking the small quantum group, one obtains a finite dimensional space -- one of the main results in \cite{KP}
    shows that in this case the analog of the representation variety is a finite dimensional algebra
    which is closely related to skein model of quantum representation of the mapping class group.
    We want to stress that in our case the action of the mapping class group in these algebra follows
    immediately from the functionality of the construction.

	\appendix
	
	\section{Background material}
	
	\subsection{Axioms for Hopf algebras}
	\label{subsec:axioms-Hopf-alg}
	
	We say that a vector space $H$ over a field $k$ has a structure of a \emph{Hopf algebra} if there are maps
	$\eta\colon k\to H$ called \emph{unit}, \emph{multiplication map} $\mu\colon H^{\otimes 2}\to H$,
	\emph{counit map} $\varepsilon\colon H\to k$, \emph{comultiplication} $\Delta \colon H\to H^{\otimes 2}$ and
	the antipode map $S\colon H\to H$ satisfying the following relations:
		\begin{enumerate}
			\item $\mu\circ (\mu\otimes \id)=\mu\circ (\id \otimes \mu)$ (associativity);
			\item $\mu\circ (\eta\otimes \id)=\mu\circ (\id\otimes \eta)=\id$ (unit axiom);
			\item $(\id\otimes \Delta)\circ \Delta = (\Delta\otimes \id)\circ \Delta$ (coassociativity);
			\item $(\id\otimes \varepsilon)\circ \Delta = (\varepsilon\circ \id)\circ \Delta =\id$ (counit axiom);
			\item $\Delta\circ \mu=(\mu\otimes \mu)\circ (\id\otimes \tau\otimes \id)\circ (\Delta\otimes \Delta)$ (compatibility of $\mu$ and $\Delta$);
			\item $\mu\circ (\id\otimes S)\circ \Delta=\eta\circ \varepsilon=\mu\circ (S\otimes \id) \Delta$ (the antipode axiom);
			\item $\Delta\circ S= (S\otimes S)\circ \tau\circ \Delta$ (compatibility of $S$ and $\Delta$),
			where $\tau\colon H^{\otimes 2}\to H^{\otimes 2}$ is defined by $\tau(x\otimes y)=y\otimes x$;
			\item $S\circ \mu=\mu\circ \tau\circ (S\otimes S)$ (compatibility of $S$ and $\mu$).	
		\end{enumerate}
		
		We say that $H$ is \emph{cocommutative} if in addition the following relation holds:
		\begin{enumerate}
			\item[9.] $\tau\circ \Delta=\Delta$ (cocommutativity).
		\end{enumerate}
        The cocommutativity implies that the antipode is of order $2$:
		\begin{enumerate}
			\item[10.] $S\circ S=\id$.
		\end{enumerate}
		
    \subsection{Basic category theory}
	
    \begin{definition}
    	A \emph{monoidal category} $\cC$ is a category equipped with
    	\begin{enumerate}
    		\item a functor $\otimes \colon \cC\times \cC\to \cC$ called the \emph{tensor product};
    		\item a \emph{unit object} $1_\cC\in\ob(\cC)$;
    		\item a natural isomorphism $\alpha\colon \left( (-)\otimes (-) \right)\otimes (-)\sto (-) \otimes \left( (-)\otimes (-) \right)$
    	called the \emph{associator};
	    	\item two natural isomorphisms $1_\cC\otimes (-)\sto \id_\cC$ and $(-)\otimes 1_\cC\sto \id_\cC$ called
	    	\emph{left} and \emph{right unitor}, respectively;
    	\end{enumerate}
    	satisfying some natural compatibility conditions, see \cite[Sect. VII.1]{ML} for details.
    \end{definition}

    \begin{definition}
    	A monoidal category $\cC$ is called \emph{braided} if it is equipped with a natural isomorphism
    	$\tau_{x,y}\colon x\otimes y\sto y\otimes x$, $x,y\in\ob(\cC)$ called the \emph{braiding}.
    	If $\tau_{y,x}\circ\tau_{x,y}=\id_{x\otimes y}$, the category $\cC$ is called \emph{symmetric monoidal}.
    	The braiding should satisfy certain natural conditions.	We refer the reader to \cite[Sect. XI]{ML} for all the details.
    \end{definition}

	\begin{definition}
		A \textbf{lax monoidal functor} $F\colon \cC\to \cD$ between monoidal categories $\cC,\cD$ is a triple $(F,\varepsilon_F,\varphi)$ consisting of a usual functor $F\colon \cC\to \cD$, a morphism
		$\varepsilon_F\colon 1_\cD\to F(1_\cC)$ and a natural transformation
		$F(c)\otimes F(c')\stackrel{\varphi_{cc'}}{\longrightarrow} F(c\otimes c')$ for any $c,c'\in \ob(\cC)$
		satisfying certain \emph{associativity} and \emph{unitality} conditions, see \cite[Section XI.2]{ML} for details.
		The naturality means that for any $f\colon c\to c'$ morphism in $\cC$, the following diagram commutes:
		\begin{equation}
		\label{eq:nat-cond-lax-mon-str}
		\xymatrix{
			F(c)\otimes F(c'') \ar[d]_-{F(f)\otimes \id_{c''}} \ar[r]^-{\varphi_{cc''}} & F(c\otimes c'') \ar[d]^-{F(f\otimes\id_{c''})}\\
			F(c')\otimes F(c'') \ar[r]^-{\varphi_{c'c''}} & F(c'\otimes c'')
		}
		\end{equation}
	\end{definition}

	If $X,Y\in \ob(\cC)$ are objects of a category $\cC$, and $\varphi_i\colon X\to Y$ is a collection of morphisms
	indexed by $i\in I$, the \emph{coequalizer} of this collection is an object $Z\in \cC$ with a morphism $\alpha\colon Y\to Z$
	satisfying the following universal property. For any $A\in\ob(\cC)$ with a morphism $\beta\colon Y\to A$
	satisfying $\beta\circ \varphi_i=\beta\circ \varphi_j$ there exists a unique morphism $\overline{\beta}\colon Z\to A$ such that
	$\beta=\overline{\beta}\circ \alpha$.

\medskip

    	Let $\cD$ be a small category which has equalizers and coequalizers, and it also has coproducts. Then for any set $S$ and any object $d \in \cD$ we have a well defined object
    	$d\cdot S =\coprod\limits_{S}d$ which is the coproduct of $S$ copies of $d$.
    	
    	Let $F\colon \cC\to \cS ets$ and $G\colon \cC^{op}\to \cD$ be two functors, and assume that category $\cD$
	is cocomplete. Define the tensor product $G\otimes_\cC F$ as
	\[
	G\otimes_\cC F:=\operatorname{coeq}\left[ \coprod\limits_{c\to c'} G(c')\cdot F(c) \rightrightarrows \coprod\limits_{c} G(c)\cdot F(c) \right]
	\]
	
		\begin{proposition}
			\label{prop:alg-structure-on-tensor}
			Let $F\colon \cC\to \cS ets$ and $G\colon \cC^{op}\to \cD$ be two lax monoidal functors between two monoidal categories.
			Assume that $\cD$ is also \emph{braided} and \emph{disctributive}, and that the maps $\varphi_{cc'}$ and $\psi_{cc'}$ respect the braiding.
			Then $G\otimes_\cC F$ has a natural structure of an algebra object in $\cD$.
			Moreover, if $\cD$ is \emph{symmetric} monoidal, then the algebra $G\otimes_\cC F$ is commutative.
		\end{proposition}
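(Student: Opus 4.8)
The plan is to put the algebra structure on the explicit presentation
$G\otimes_\cC F=\operatorname{coeq}\big[\coprod_{c\to c'}G(c')\cdot F(c)\rightrightarrows\coprod_c G(c)\cdot F(c)\big]$
and then verify the axioms by descent along this coequalizer. First I would build, for each pair $c,c'\in\ob(\cC)$, a map $(G(c)\cdot F(c))\otimes(G(c')\cdot F(c'))\to G\otimes_\cC F$: distributivity of $\cD$ identifies the source with $\coprod_{(a,a')\in F(c)\times F(c')}G(c)\otimes G(c')$, and on each summand I apply the lax structure map $\psi_{cc'}\colon G(c)\otimes G(c')\to G(c\otimes c')$ of the contravariant functor $G$ while reindexing the coproduct along the lax structure map $\varphi_{cc'}\colon F(c)\times F(c')\to F(c\otimes c')$ of $F$; this lands in $G(c\otimes c')\cdot F(c\otimes c')$, hence in $\coprod_e G(e)\cdot F(e)$, and I post-compose with the coequalizer projection. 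Taking the coproduct over all $c,c'$ and commuting $\otimes$ past these colimits (legitimate by distributivity of $\cD$, together with the cocontinuity of $X\otimes(-)$ and $(-)\otimes X$ that holds in all cases of interest) yields a candidate multiplication $m\colon\big(\coprod_c G(c)\cdot F(c)\big)^{\otimes 2}\to G\otimes_\cC F$, and I then argue it descends to $(G\otimes_\cC F)^{\otimes 2}$.

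The descent step is the one I expect to be the main obstacle: $m$ must kill the coequalizer relations in \emph{each} tensor factor separately. Fix a morphism $f\colon c_0\to c_1$ of $\cC$; the relation trades $G(f)(v)$ sitting in the $c_0$-summand (indexed by $w\in F(c_0)$) for $v$ sitting in the $c_1$-summand (indexed by $F(f)(w)$). Insensitivity of the construction to this trade is exactly the naturality of the lax structure maps, applied to $f\otimes\id_{c'}$ on the left and $\id_{c}\otimes f$ on the right, together with functoriality of $F$ and $G$ and bifunctoriality of $\otimes$ --- here the relevant diagram is \eqref{eq:nat-cond-lax-mon-str} for $\varphi$ and its analogue for $\psi$. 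The only real care needed is in tracking the coproduct indices and the $\varphi$-reindexing coherently; everything else is formal. Once descent is established, the unit is the composite $1_\cD\xrightarrow{\varepsilon_G}G(1_\cC)\hookrightarrow G(1_\cC)\cdot F(1_\cC)\to G\otimes_\cC F$, where the inclusion picks out the component indexed by $\varepsilon_F\in F(1_\cC)$.

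It then remains to check the three algebra axioms, each of which reduces to a coherence already available. The unit axioms $m\circ(u\otimes\id)=\id=m\circ(\id\otimes u)$ follow from the unitality constraints of the lax monoidal functors $F$ and $G$ combined with the coequalizer relations attached to the unitors $1_\cC\otimes c\cong c\cong c\otimes 1_\cC$ of $\cC$: multiplying by $u$ deposits an element in the $1_\cC\otimes c$ (resp.\ $c\otimes 1_\cC$) summand, and the unitor relation identifies it with the original element in the $c$-summand. Associativity of $m$ reduces, on summands, to the associativity constraints for $\varphi$ and $\psi$ and the associators of $\cC$ and $\cD$, modulo the coequalizer relation for the associator morphism of $\cC$. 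Finally, if $\cD$ is symmetric then on the $(c,c')$-summands the symmetry $\beta$ of $(G\otimes_\cC F)^{\otimes2}$ becomes, via distributivity, the symmetry $G(c')\otimes G(c)\xrightarrow{\sim}G(c)\otimes G(c')$ of $\cD$; since $\psi$ and $\varphi$ respect the braidings, this symmetry is intertwined with $G$ and $F$ applied to the symmetry $c'\otimes c\xrightarrow{\sim}c\otimes c'$ of $\cC$, and the coequalizer relation for that morphism shows $m\circ\beta$ and $m$ agree on each summand, hence after taking the coproduct --- so $G\otimes_\cC F$ is commutative. (Here I read the hypothesis that $\varphi_{cc'},\psi_{cc'}$ ``respect the braiding'' as the assertion that $F$ and $G$ are braided lax monoidal, which presupposes $\cC$ braided, as it is in the intended application $\cC=\Ho$.)
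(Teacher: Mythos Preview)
Your proposal is correct and follows the paper's own approach: build the product on summands from the lax structure maps $\psi_{cc'}$ and $\varphi_{cc'}$, invoke naturality (diagram~\eqref{eq:nat-cond-lax-mon-str}) for descent to the coequalizer, and read off the unit, associativity, and commutativity from the corresponding lax monoidal coherences. Your version is in fact more scrupulous than the paper's brief argument, which writes the multiplication as $(\psi_{cc'}\otimes\varphi_{cc'})\circ(\id\otimes\tau\otimes\id)$ on $G(c)\otimes F(c)\otimes G(c')\otimes F(c')$ and thereby tacitly treats the set $F(c)$ as an object of $\cD$; your use of distributivity to unpack $G(c)\cdot F(c)$ as a coproduct is the honest reading of the same step.
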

		\begin{proof}
			Define
			\[
			(\psi_{cc'}\otimes \varphi_{cc'})\circ (\id\otimes \tau \otimes \id)\colon G(c)\otimes F(c)\otimes G(c')\otimes F(c')\to G(c\otimes c')\otimes F(c\otimes c')
			\]
			using the braiding operator $\tau=\tau_{F(c),G(c')}$ on the symmetric monoidal category $\cD$.
			The naturality conditions \eqref{eq:nat-cond-lax-mon-str} imply that these maps induce a well-defined multiplication map
			on $G\otimes_\cC F$. The unit $1_\cD\to G\otimes_\cC F$ is induced by natural map
			\[
			1_\cD\simeq 1_\cD\otimes 1_\cD \stackrel{\varepsilon_G\otimes \varepsilon_F}{\longrightarrow} G(1_\cC)\otimes F(1_\cC)
			\]
			The associativity and unitality for $G\otimes_\cC F$ follow from the associativity and unitality
			axioms for lax monoidal functors.			
		\end{proof}

	\begin{lemma}
		\label{lem:tensor-with-corepres-functor}
		If $F=h^x$ for some $x\in\ob\cC$, then $G\otimes_\cC h^{x}\simeq G(x)$.
	\end{lemma}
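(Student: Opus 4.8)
The statement is the \emph{co-Yoneda lemma} (also called the density formula; cf. \cite[Sect. IX.5]{ML}), and the plan is to verify it directly from the universal property of the coequalizer that defines $G\otimes_\cC F$, without invoking any heavy machinery.

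First I would unwind what a morphism $G\otimes_\cC h^x\to A$ in $\cD$ is, for an arbitrary object $A\in\ob\cD$. By the definition of the tensor product as the coequalizer of $\coprod_{c\to c'}G(c')\cdot h^x(c)\rightrightarrows\coprod_c G(c)\cdot h^x(c)$, together with the universal property of coproducts and of the copower $G(c)\cdot S=\coprod_S G(c)$, such a morphism is the same datum as a family of morphisms $\theta_{c,\phi}\colon G(c)\to A$, one for each object $c\in\ob\cC$ and each $\phi\in\hom_\cC(x,c)=h^x(c)$, subject to the compatibility condition coming from the two parallel arrows: for every $g\colon c\to c'$ in $\cC$ and every $\phi\colon x\to c$ one must have $\theta_{c,\phi}\circ G(g)=\theta_{c',\,g\circ\phi}$. (Here it is crucial that $G$ is contravariant, so $G(g)\colon G(c')\to G(c)$, while $h^x(g)$ sends $\phi$ to $g\circ\phi$; the composite $G(c)\xrightarrow{G(g)\text{-chain}}A$ typechecks as written.)

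The key observation is then that such a compatible family is completely determined by the single morphism $\Theta:=\theta_{x,\id_x}\colon G(x)\to A$: specializing the compatibility condition to $g=\phi\colon x\to c$ and to the index $\id_x\in\hom_\cC(x,x)$ gives $\theta_{x,\id_x}\circ G(\phi)=\theta_{c,\phi}$, that is $\theta_{c,\phi}=\Theta\circ G(\phi)$. Conversely, for any $\Theta\colon G(x)\to A$ the assignment $\theta_{c,\phi}:=\Theta\circ G(\phi)$ does satisfy the compatibility condition, since $\theta_{c,\phi}\circ G(g)=\Theta\circ G(\phi)\circ G(g)=\Theta\circ G(g\circ\phi)=\theta_{c',\,g\circ\phi}$ by contravariant functoriality of $G$. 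Hence $\hom_\cD(G\otimes_\cC h^x,A)\cong\hom_\cD(G(x),A)$ naturally in $A$, and by the Yoneda lemma the two objects $G\otimes_\cC h^x$ and $G(x)$ are isomorphic. Tracing the correspondence, the isomorphism $G(x)\sto G\otimes_\cC h^x$ is induced by including $G(x)$ as the summand indexed by $\id_x\in h^x(x)$ into $\coprod_c G(c)\cdot h^x(c)$ and then projecting to the coequalizer; its inverse is induced on the summand indexed by $\phi\colon x\to c$ by $G(\phi)\colon G(c)\to G(x)$, which is well defined on the coequalizer precisely because of the defining relation applied with $g=\phi$.

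I do not expect any genuine obstacle here: the only thing requiring care is bookkeeping — keeping the two directions of functoriality straight (covariant in the $F=h^x$ slot, contravariant in the $G$ slot), and correctly identifying a cocone under $\coprod_{c\to c'}G(c')\cdot h^x(c)\rightrightarrows\coprod_c G(c)\cdot h^x(c)$ with a compatible family $\{\theta_{c,\phi}\}$ via the coproduct/copower adjunction. One could equally well bypass the Yoneda step and check directly that the two explicit morphisms described above are mutually inverse, the only nontrivial composite being handled again by the coequalizer relation with $g=\phi$.
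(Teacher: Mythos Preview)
Your proof is correct and essentially the same as the paper's. Both arguments verify the universal property of the coequalizer by showing that a cocone $\{\theta_{c,\phi}\}$ is determined by its component $\theta_{x,\id_x}$; the paper phrases this as directly exhibiting $G(x)$ as the coequalizer (with $\overline\psi:=\psi|_{G(x)_{\id_x}}$), whereas you package the same computation as a natural bijection $\hom_\cD(G\otimes_\cC h^x,A)\cong\hom_\cD(G(x),A)$ and then invoke Yoneda, but the content is identical.
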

	\begin{proof}
		In this case $G\otimes_\cC h^{x}$ is the coequalizer of the diagram $\xymatrix{
			\coprod\limits_{x\to c\to c'} G(c') \ar@<+2pt>[r]^-{\alpha} \ar@<-2pt>[r]_-{\beta} &
			\coprod\limits_{x\to c} G(c)
		}$ where $\alpha$ is $\id\colon G(c')_{x\to c\to c'}\to G(c')_{x\to c'}$ sending the component indexed by $x\to c\to c'$
		to the one indexed by the composition $x\to c'$.
		The map $\beta$ is $\xymatrix{G(c')_{x\to c\to c'}\ar[rr]^-{G(c\to c')} & & G(c)_{x\to c}}$.
		
		There is a natural map
		\begin{equation}
		\label{eq:coeq-map-repres-tensor-prod}
		\varphi\colon \coprod\limits_{x\to c}G(c) \to G(x)
		\end{equation}
		which on the component $G(c)$ indexed by a morphism $x\to c$ is given by $G(x\to c)\colon G(c)\to G(x)$.
		To prove the desired isomorphism, we need to check that $\varphi$ satisfies the coequalizer universal property.
		In other words, we need to check that for any $\psi\colon \coprod\limits_{x\to c}G(c) \to Y $
		there exists unique $\overline{\psi}$ making the following diagram commute:
		\[
		\xymatrix{
			\coprod\limits_{x\to c\to c'} G(c') \ar@<+2pt>[r] \ar@<-2pt>[r] &
			\coprod\limits_{x\to c} G(c) \ar[d]_-{\psi} \ar[r]^-{\varphi} & G(x) \ar@{-->}[dl]^-{\exists ! \overline{\psi}}\\
			& Y
		}
		\]
		We define $\overline{\psi}$ to be the restriction of $\psi$ on the component $G(c)$ indexed by $\id_x\colon x\to x$.
		It is a direct check that the resulting diagram commutes.
	\end{proof}

\begin{lemma}
	\label{lem:corepres-funct-is-lax-monoid}
	Suppose that $x\in \ob(\cC)$ is a \emph{counital coalgbra object} in $\cC$, i.e. there are morphisms $x\to x\otimes x$ and $x\to 1_\cC$ satisfying the usual compatibility axioms. Then $F:=h^x\colon \cC\to \cS ets$ is a lax monoidal functor,
	where the monoidal structure on $\cS ets$ is given by the product $\times$ of sets.
\end{lemma}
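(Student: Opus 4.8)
The plan is to write down the candidate lax monoidal structure on $F=h^x$ explicitly and then recognize the coherence axioms for a lax monoidal functor as exact translations of the coassociativity and counit axioms for the coalgebra object $x$. Denote the structure maps of $x$ by $\delta\colon x\to x\otimes x$ and $\varepsilon_x\colon x\to 1_\cC$. Since the monoidal unit of $\cS ets$ is the one-point set $\{\ast\}$, a morphism $\varepsilon_F\colon\{\ast\}\to F(1_\cC)=\hom_\cC(x,1_\cC)$ is the same as a choice of element, and I take $\varepsilon_F(\ast):=\varepsilon_x$. For the comparison maps $\varphi_{c,c'}\colon F(c)\times F(c')\to F(c\otimes c')$, given $f\in\hom_\cC(x,c)$ and $g\in\hom_\cC(x,c')$ I set
\[
\varphi_{c,c'}(f,g):=(f\otimes g)\circ\delta\colon x\xrightarrow{\ \delta\ }x\otimes x\xrightarrow{\ f\otimes g\ }c\otimes c'.
\]

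First I would check that $\varphi$ is natural, i.e.\ that the square \eqref{eq:nat-cond-lax-mon-str} commutes: for $u\colon c\to c'$ and any $f,g$ one has $(u\otimes\id)\circ(f\otimes g)=(u\circ f)\otimes g$ by bifunctoriality of $\otimes$, and precomposing with $\delta$ gives $\varphi_{c',c''}(u\circ f,\,g)=F(u\otimes\id)\bigl(\varphi_{c,c''}(f,g)\bigr)$; naturality in the second variable is the mirror computation. Next I would verify the associativity axiom for a lax monoidal functor, which unwinds, on $(f,g,h)\in F(c)\times F(c')\times F(c'')$, to the identity
\[
\bigl((f\otimes g)\otimes h\bigr)\circ(\delta\otimes\id_x)\circ\delta \;=\; \alpha_{c,c',c''}^{-1}\circ\bigl(f\otimes(g\otimes h)\bigr)\circ(\id_x\otimes\delta)\circ\delta,
\]
and this follows by inserting coassociativity of $\delta$ in the form $(\delta\otimes\id_x)\circ\delta=\alpha_{x,x,x}^{-1}\circ(\id_x\otimes\delta)\circ\delta$ and then pushing the associator past $f,g,h$ using naturality of $\alpha$.

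Finally, the two unit axioms reduce to the counit identities for $x$: chasing $(f,\ast)$ through $F(c)\times\{\ast\}\to F(c)\times F(1_\cC)\xrightarrow{\varphi}F(c\otimes 1_\cC)\xrightarrow{F(\rho_c)}F(c)$ gives $\rho_c\circ(f\otimes\id_{1_\cC})\circ(\id_x\otimes\varepsilon_x)\circ\delta=\rho_c\circ(f\otimes\id_{1_\cC})\circ\rho_x^{-1}=f$, using the counit identity $(\id_x\otimes\varepsilon_x)\circ\delta=\rho_x^{-1}$ together with naturality of the right unitor $\rho$; the left unit axiom is identical with $(\varepsilon_x\otimes\id_x)\circ\delta=\lambda_x^{-1}$ and the left unitor $\lambda$. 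I expect the only real difficulty to be purely bookkeeping---threading the associator and unitor constraints of $\cC$ through the coherence diagrams so that they match the coalgebra axioms term by term---and in the intended application $\cC=\Ho$ these constraints are identities, so the whole verification collapses to the displayed equalities read off directly from coassociativity and the counit axioms for $x$.
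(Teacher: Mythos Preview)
Your proposal is correct and follows exactly the approach of the paper: the lax structure maps are precisely those induced by $\delta$ and $\varepsilon_x$ via precomposition. The paper's own proof records only the construction of $\varphi$ and $\varepsilon_F$ and omits the coherence checks you spell out, so your write-up is simply a fuller version of the same argument.
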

\begin{proof}
	The map $x\to x\otimes x$ induces the natural map $F(y)\times F(y')\to F(y\otimes y')$,
	and the map $x\to 1_\cC$ induces the natural map $1_{\cS ets}=\{\ast \}\to F(1_\cC)=\hom(x,1_\cC)$.
\end{proof}

\begin{lemma}
	\label{lem:isom-of-algebras-from-tensor}
	Let $x$ is a counital coalgebra object in $\cC$, $F=h^x\colon \cC\to \cS ets$ is the corresponding
	functor, and let $G\colon \cC^{op}\to \cD$ a lax monoidal contravariant functor to a cocomplete braided monoidal category $\cD$.
	Then the isomorphism $G\otimes_\cC h^x \simeq G(x)$ of Lemma \ref{lem:tensor-with-corepres-functor}
	is an \emph{isomorphism of algebras}.
\end{lemma}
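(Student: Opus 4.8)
The plan is to make the isomorphism of Lemma~\ref{lem:tensor-with-corepres-functor} completely explicit and then to check, by a direct computation, that it carries one algebra structure onto the other; there is no genuine difficulty here, only bookkeeping. First recall the two algebra structures. Since $x$ is a counital coalgebra object, with comultiplication $\delta\colon x\to x\otimes x$ and counit $\varepsilon\colon x\to 1_\cC$, applying the contravariant lax monoidal functor $G$ gives $G(x)$ the multiplication
\[
G(x)\otimes G(x)\ \stackrel{\psi_{xx}}{\longrightarrow}\ G(x\otimes x)\ \stackrel{G(\delta)}{\longrightarrow}\ G(x)
\]
and the unit $1_\cD\stackrel{\varepsilon_G}{\longrightarrow}G(1_\cC)\stackrel{G(\varepsilon)}{\longrightarrow}G(x)$, where $\psi_{cc'}$ and $\varepsilon_G$ denote the lax structure maps of $G$; coassociativity and counitality of $(x,\delta,\varepsilon)$ together with the coherence of the lax structure make this an associative unital algebra. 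On the other side, by Lemma~\ref{lem:corepres-funct-is-lax-monoid} the functor $h^x$ is lax monoidal, with structure map $\hom(x,c)\times\hom(x,c')\to\hom(x,c\otimes c')$ given by $(f,g)\mapsto(f\otimes g)\circ\delta$, so Proposition~\ref{prop:alg-structure-on-tensor} equips $G\otimes_\cC h^x$ with an algebra structure.

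Next I would invoke the explicit description of the isomorphism $\Phi\colon G\otimes_\cC h^x\stackrel{\sim}{\longrightarrow}G(x)$ obtained in the proof of Lemma~\ref{lem:tensor-with-corepres-functor}: it sends the class represented by an element $v\in G(c)$ sitting in the component indexed by $f\colon x\to c$ to $G(f)(v)\in G(x)$. In particular every element of $G\otimes_\cC h^x$ has a representative $v\in G(x)$ lying in the component indexed by $\id_x$, and on such representatives $\Phi$ is literally the identity map onto $G(x)$. It therefore suffices to evaluate the product and the unit of $G\otimes_\cC h^x$ on such representatives. Unwinding the multiplication of Proposition~\ref{prop:alg-structure-on-tensor} with $c=c'=x$: the product of the classes of $v,w\in G(x)$, both taken in the $\id_x$-component, is the class of $\psi_{xx}(v\otimes w)\in G(x\otimes x)$ lying in the component indexed by $(\id_x\otimes\id_x)\circ\delta=\delta\colon x\to x\otimes x$; applying $\Phi$ then gives $G(\delta)\bigl(\psi_{xx}(v\otimes w)\bigr)$, which is precisely the product of $v$ and $w$ in $G(x)$. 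Likewise the unit of $G\otimes_\cC h^x$ is the class of $\varepsilon_G(1)\in G(1_\cC)$ in the component indexed by $\varepsilon\colon x\to 1_\cC$, and $\Phi$ carries it to $G(\varepsilon)\bigl(\varepsilon_G(1)\bigr)$, the unit of $G(x)$. Hence $\Phi$ is an isomorphism of algebras, automatically commutative when $\cD$ is symmetric monoidal by Proposition~\ref{prop:alg-structure-on-tensor}.

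The only step that demands a little attention --- and it is really the only place where anything must be checked --- is reading off the multiplication formula in the form used above. In Proposition~\ref{prop:alg-structure-on-tensor} the multiplication on $G\otimes_\cC F$ is written, for $\cS ets$-valued $F$, as a map out of $G(c)\otimes F(c)\otimes G(c')\otimes F(c')$; for $F=h^x$ one must read this as a map out of $\bigl(G(c)\cdot\hom(x,c)\bigr)\otimes\bigl(G(c')\cdot\hom(x,c')\bigr)\cong\bigl(G(c)\otimes G(c')\bigr)\cdot\bigl(\hom(x,c)\times\hom(x,c')\bigr)$, the braiding $\tau$ appearing in the formula being exactly this reindexing of the set-theoretic labels past a factor of $\cD$. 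Once this identification is spelled out there is nothing left to do: $\Phi$ is a bijection transporting the structure maps of one algebra onto those of the other, so associativity, unitality, and commutativity require no separate verification. I expect this bookkeeping to be the main --- and quite mild --- obstacle.
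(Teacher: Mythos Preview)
Your proposal is correct and follows essentially the same approach as the paper: the paper simply notes that both algebra structures exist, records that the multiplication on $G(x)$ is $G(\delta)\circ\psi_{xx}$ with unit $G(\varepsilon)\circ\varepsilon_G$, and then asserts that checking the coequalizer map $\varphi$ of Lemma~\ref{lem:tensor-with-corepres-functor} is an algebra homomorphism is ``a direct verification''. You have written out exactly that direct verification --- picking representatives in the $\id_x$-component and tracking the product and unit through $\Phi$ --- so your argument is a fleshed-out version of the paper's.
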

\begin{proof}
	The functor $F=h^x$ is lax monoidal by Lemma \ref{lem:corepres-funct-is-lax-monoid}, and so
	by Proposition \ref{prop:alg-structure-on-tensor}, the object $G\otimes_\cC F\in \ob(\cD)$
	is indeed an algebra. The algebra structure on $G(x)$ is induced by $G(x)\otimes G(x) \to G(x\otimes x) \to G(x) $
	and $1_\cD\to G(1_\cC)\to G(x)$.
	
	Since the algebra structure on the tensor product $G\otimes_\cC h^x$ is induced by the algebra structure
	on $\coprod\limits_{x\to c} G(c)$, it is enough to verify that the map $\varphi$ from \eqref{eq:coeq-map-repres-tensor-prod}
	is an algebra homomorphism. This is a direct verification.
\end{proof}

	\begin{lemma}
		\label{lem:colims-in-functors}
		Assume $\cD$ has limits and colimits.
		Then the functor category $\cF un(\mathcal{C}, \mathcal{D})$ has limits and colimits
		(in particular equalizers and coequalizers), which can be computed point wise.
	\end{lemma}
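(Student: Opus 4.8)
The plan is to prove both halves at once: by duality (replacing $\cD$ by $\cD^{op}$) it suffices to show that colimits in $\cF un(\cC,\cD)$ exist and are computed objectwise. Fix a small category $\mathcal{I}$ and a diagram $D\colon\mathcal{I}\to\cF un(\cC,\cD)$; unwinding the definition, $D$ assigns to each $i\in\ob(\mathcal{I})$ a functor $D_i\colon\cC\to\cD$ and to each $\alpha\colon i\to j$ a natural transformation $D_\alpha\colon D_i\Rightarrow D_j$, compatibly with composition and identities.

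First I would define the candidate colimit objectwise. For each $c\in\ob(\cC)$ the rule $i\mapsto D_i(c)$, $\alpha\mapsto (D_\alpha)_c$ is an $\mathcal{I}$-diagram in $\cD$, so by hypothesis it has a colimit $L(c):=\colim_{i}D_i(c)$ with universal cocone $\lambda^c_i\colon D_i(c)\to L(c)$. Next I would promote $L$ to a functor $\cC\to\cD$: for a morphism $f\colon c\to c'$ of $\cC$ the maps $\lambda^{c'}_i\circ D_i(f)\colon D_i(c)\to L(c')$ form a cocone over $i\mapsto D_i(c)$ --- here one uses that every $D_\alpha$ is natural, so that the $D_i(f)$ constitute a morphism between the two $\mathcal{I}$-diagrams obtained by evaluating $D$ at $c$ and at $c'$ --- and hence factor uniquely through a map $L(f)\colon L(c)\to L(c')$ satisfying $L(f)\circ\lambda^c_i=\lambda^{c'}_i\circ D_i(f)$. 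That $L$ preserves identities and composition follows from the uniqueness clause in the universal property of $L(c)$, since in each case both candidate maps $L(c)\to L(c')$ restrict along every $\lambda^c_i$ to the same morphism. The same argument shows that for fixed $i$ the family $\{\lambda^c_i\}_c$ is a natural transformation $\lambda_i\colon D_i\Rightarrow L$, and that $i\mapsto\lambda_i$ is a cocone under $D$ in $\cF un(\cC,\cD)$.

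Finally I would verify the universal property of $(L,\{\lambda_i\})$. Given a functor $M\colon\cC\to\cD$ and a cocone $\{\mu_i\colon D_i\Rightarrow M\}$ under $D$ in the functor category, for each $c$ the morphisms $(\mu_i)_c\colon D_i(c)\to M(c)$ form a cocone in $\cD$, so the universal property of $L(c)$ yields a unique $\overline{\mu}_c\colon L(c)\to M(c)$ with $\overline{\mu}_c\circ\lambda^c_i=(\mu_i)_c$. Naturality of $c\mapsto\overline{\mu}_c$ is again an instance of uniqueness of maps out of the colimit $L(c)$, so the $\overline{\mu}_c$ assemble into $\overline{\mu}\colon L\Rightarrow M$ with $\overline{\mu}\circ\lambda_i=\mu_i$; it is unique since each $\overline{\mu}_c$ is. Hence $(L,\{\lambda_i\})=\colim_{\mathcal{I}}D$, and the construction is objectwise by definition. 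Equalizers and coequalizers are the special cases where $\mathcal{I}$ is, respectively, the parallel-pair category $\bullet\rightrightarrows\bullet$ and its opposite.

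The only genuine work, and the main (minor) obstacle, is verifying that $L$ really is a functor and that $\lambda_i$ and $\overline{\mu}$ are natural. Each such check reduces to the slogan ``two maps out of a colimit that agree on the universal cocone are equal,'' so no individual step is hard; the one place requiring care is the bifunctor bookkeeping --- in particular confirming that, for a fixed morphism $f$ of $\cC$, the collection $\{D_i(f)\}_i$ is a morphism of $\mathcal{I}$-diagrams in $\cD$, which is precisely what makes $L(f)$ well defined.
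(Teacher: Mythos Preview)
Your argument is correct and is the standard proof of this fact. The paper, however, does not actually give a proof: it simply records the statement as well known and cites \cite[Section 3.3]{Kel05}. So your write-up is strictly more informative than what appears in the paper; there is nothing to compare beyond noting that you have supplied the details the authors chose to outsource to the literature.
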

	\begin{proof}
		It's a well-known fact. See, for example, \cite[Section 3.3]{Kel05}.
	\end{proof}

\medskip
    The only slightly non-trivial result we need is that tensor product over categories commute with coequalizers.

	\begin{lemma}
		\label{lem:tensor-as-coeq}
		If $\otimes$ commutes with colimits in the category $\mathcal{D}$, then the tensor product $G \otimes_\mathcal{C} F/\{\varphi_i\}$
		is the coequalizer in $\mathcal{D}$ of the maps $\overline{\varphi}_i: G(c) \otimes d   \to G \otimes_\mathcal{C} F$.
	\end{lemma}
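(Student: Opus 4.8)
The plan is to verify that $G \otimes_\cC F/\{\varphi_i\}$ satisfies the universal property of the claimed coequalizer by reducing everything to pointwise colimit computations and using that $\otimes$ commutes with colimits in $\cD$. First I would recall from Lemma~\ref{lem:quot-functor} that for each object $c' \in \ob(\cC)$, the value $F/\{\varphi_i\}(c')$ is the coequalizer in $\cS ets$ of the maps $\widetilde\varphi_i \colon \hom_\cC(c,c') \times d \to F(c')$. Then I would write out $G \otimes_\cC (F/\{\varphi_i\})$ using its defining coequalizer presentation and substitute this pointwise description of $F/\{\varphi_i\}$ into it.

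The key observation is that the tensor product $G \otimes_\cC (-)$, as a functor in the second variable, is itself a colimit construction (a coequalizer of coproducts), and therefore commutes with colimits computed in the functor category $\cF un(\cC, \cS ets)$ — in particular with the coequalizer defining $F/\{\varphi_i\}$ out of the representable-type functors $d \cdot \hom_\cC(c,-)$ and $F$. Thus I would argue
\[
G \otimes_\cC (F/\{\varphi_i\}) \simeq \operatorname{coeq}\Big[ \textstyle\coprod_i\, G \otimes_\cC \big(d \cdot \hom_\cC(c,-)\big) \rightrightarrows G \otimes_\cC F \Big].
\]
Here the use of Lemma~\ref{lem:colims-in-functors} is essential: colimits in $\cF un(\cC,\cS ets)$ are computed pointwise, so the coequalizer presenting $F/\{\varphi_i\}$ is a genuine colimit in the functor category, and $G \otimes_\cC(-)$ being a left adjoint (equivalently, a colimit-type construction that commutes with colimits under the hypothesis on $\otimes$) preserves it.

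Next I would identify the two terms on the right. By Lemma~\ref{lem:tensor-with-corepres-functor} applied with the (set-valued) representable functor $h^c = \hom_\cC(c,-)$, together with the fact that $\otimes$ distributes over the coproduct defining $d \cdot h^c$, we get $G \otimes_\cC (d \cdot h^c) \simeq G(c) \otimes d$. Tracing through the identifications, the induced map $\coprod_i (G(c) \otimes d) \rightrightarrows G \otimes_\cC F$ becomes precisely $\overline{\varphi}_i$ on each component, where $\overline{\varphi}_i$ sends $G(c)\otimes d$ into $G \otimes_\cC F$ via the natural transformation $d\cdot h^c \Rightarrow F$ induced by $\varphi_i$. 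Hence $G\otimes_\cC (F/\{\varphi_i\})$ is the coequalizer of the $\overline\varphi_i$, as claimed.

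The main obstacle I anticipate is purely bookkeeping: carefully checking that the map appearing in the coequalizer after applying $G\otimes_\cC(-)$ to the natural transformation $d\cdot h^c \Rightarrow F$ genuinely coincides, under the isomorphism $G\otimes_\cC(d\cdot h^c)\simeq G(c)\otimes d$, with the map $\overline\varphi_i$ defined in the statement — that is, that no twisting or reindexing is introduced by the identifications in Lemma~\ref{lem:tensor-with-corepres-functor}. This is a diagram chase through the explicit coequalizer formulas and the component indexed by $\id_c$; it is routine but is the one place where a sign- or index-error could creep in, so I would spell out the relevant commuting square rather than leave it to the reader.
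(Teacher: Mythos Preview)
Your proposal is correct and rests on exactly the same underlying principle as the paper's proof: the interchange of two colimits. The paper carries this out by hand, assembling an explicit $2\times 2$ square of coproducts indexed by a product $I\times J$ of two walking-parallel-pair categories and then invoking Fubini for colimits to swap $\colim_I\colim_J$ with $\colim_J\colim_I$; one order yields $G\otimes_\cC(F/\{\varphi_i\})$, the other the coequalizer of $G(c)\otimes d \rightrightarrows G\otimes_\cC F$. Your version packages the same computation at a higher level of abstraction, observing directly that $G\otimes_\cC(-)$ is cocontinuous (as a colimit of functors of the form $(-)\mapsto G(c)\cdot(-)$, each of which preserves colimits) and then invoking Lemma~\ref{lem:tensor-with-corepres-functor} to identify $G\otimes_\cC(d\cdot h^c)\simeq G(c)\otimes d$. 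Both routes are valid; yours is a bit slicker, while the paper's explicit diagram makes the identification of the induced maps with the $\overline\varphi_i$ visible rather than leaving it to the diagram chase you flag at the end.
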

	\begin{proof}
		The idea of the proof is simple. We will express $G \otimes_\mathcal{C} F/\{\varphi_i\}$ as a colimit in
		the functor category $Fun(I\times J,D)$ where $I$ and $J$ are certain index categories. Then we will
		use the fact that this colimit can be computed coordinate-wise (see, for example, \cite[Eq. (2.19)]{EH69}).
		When doing $\colim_I\colim_J$ we get the coequalizer of the maps
		$\overline{\varphi}_i: G(c) \otimes d \to G \otimes_\mathcal{C} F$.
		When doing $\colim_J\colim_I$ we will get $G \otimes_\mathcal{C} F/\{\varphi_i\}$.
		
		By definition, $G \otimes_\mathcal{C} F$ is the coequalizer of the diagram
		\[
		\xymatrix{
			\coprod\limits_{c'\stackrel{f}{\to} c''} G(c'')\otimes \left(F/\{\varphi_i\}\right)(c') \ar@<+3pt>[r]^-{f_\ast} \ar@<-3pt>[r]_-{f^\ast} &
			\coprod\limits_{c'''} G(c''')\otimes \left(F/\{\varphi_i\}\right)(c''')
		}
		\]
		Moreover, $\left(F/\{\varphi_i\}\right)(c')$ is the coequalizer of the diagram
		\[
		\xymatrix{
			\coprod\limits_{c\to c'} d \ar@<+3pt>[r]^-{\varphi_i} \ar@<-3pt>[r] \ar[r] & F(c')
		}
		\]
		
		Therefore, using the fact that $\otimes$ commutes with colimits, we have
		\[
					\coprod\limits_{c'\to c''} G(c'')\otimes \left(F/\{\varphi_i\}\right)(c')
					\simeq \colim_{\varphi_i} \left[ \coprod\limits_{c\to c'\to c''} G(c'')\otimes d \rightrightarrows
					\coprod\limits_{c'\to c''} G(c'')\otimes F(c') \right]
		\]
		
		Similarly, there is an isomorphism
		\[
		\coprod\limits_{c'''} G(c''')\otimes \left(F/\{\varphi_i\}\right)(c''') \simeq \colim_{\varphi_i}\left[ \coprod\limits_{c\to c'''} G(c''')\otimes d \rightrightarrows \coprod\limits_{c'''} G(c''')\otimes F(c''') \right]
		\]
		
		Next, we can realize $G(c)\otimes d$ as the coequalizer of $\coprod\limits_{c\to c'\to c''} G(c'')\otimes d \rightrightarrows \coprod\limits_{c\to c'''} G(c''')\otimes d $ where the two maps we are coequalizing are as follows. One is the identity map
		from $G(c'')\otimes d$ labeled by a chain $c\stackrel{f}{\to} c'\stackrel{g}{\to} c''$ to $G(c'')\otimes d$ labeled by the composition $c\stackrel{g\circ f}{\longrightarrow} c''$. The second map, restricted to the $G(c'')\otimes d$ labeled by $c\stackrel{f}{\to} c'\stackrel{g}{\to} c''$
		is the map $G(f)\otimes d\colon G(c'')\otimes d\to G(c')\otimes d$. Therefore, diagrammatically, we have the following picture:
		\[
		\xymatrix{
			A \ar@<+3pt>[d] \ar@<-3pt>[d] \ar@<+3pt>[r]^-{\varphi_i} \ar@<-3pt>[r] \ar[r] & B \ar@<+3pt>[d] \ar@<-3pt>[d] \ar@{-->}[r]^-{\colim} & X \ar@<+3pt>[d] \ar@<-3pt>[d]\\
			C \ar@{-->}[d]_-{\colim} \ar@<+3pt>[r]^-{\varphi_i} \ar@<-3pt>[r] \ar[r] & D \ar@{-->}[d]_-{\colim} \ar@{-->}[r]^-{\colim} & Y \ar@{-->}[d]_-{\colim}\\
			G(c)\otimes d  \ar@<+3pt>[r]^-{\varphi_i} \ar@<-3pt>[r] \ar[r] & G\otimes_\cC F \ar@{-->}[r]^-{\colim} & G \otimes_\mathcal{C} F/\{\varphi_i\}&
		}
		\]
		
		Now we have two diagram categories $J=\{\bullet \rightrightarrows \bullet \}$
		(for the coend maps) and $I=\{\bullet \rightrightarrows \bullet \}$ (for the maps $\varphi_i$). By definition,
		$G\otimes_{\cC}F/\{\varphi_i\}=\colim_J\colim_I$ of the top left $2\times 2$ square in the diagram above,
		and the coequalizer $G(c)\otimes d\rightrightarrows G\otimes_\cC F$
		is by definition $\colim_I\colim_J$ of the same $2\times 2$ square.
		Now by \cite[Eq. (2.19)]{EH69} the two colimits are isomorphic.
	\end{proof}

\bibliography{references}
\bibliographystyle{acm}

\end{document}